\newtheorem{thm}{Theorem}[section]
\newtheorem{lemma}[thm]{Lemma}
\theoremstyle{definition}
\numberwithin{equation}{section}
\newcommand{\R}{\mathbb{R}}
\newcommand{\C}{\mathbb{C}}
\newcommand{\Z}{\mathbb{Z}}
\newcommand{\abs}[1]{\mathord{\left|#1\right|}}
\newcommand{\nrm}[1]{\mathord{\left\lVert #1 \right\rVert}}
\newcommand{\eps}{\varepsilon}
\newcommand{\zsum}{\sideset{}{^*}\sum}
\newcommand{\eand}{\,\,\,\text{ and }\,\,\,}
\newcommand{\floor}[1]{\left\lfloor #1 \right\rfloor}
\newcommand{\nsum}{\sideset{}{'}\sum}
\DeclareMathOperator{\Eit}{\tilde Ei}
\DeclareMathOperator{\li}{li}
\DeclareMathOperator{\Ei}{Ei}
\begin{document}

\baselineskip=17pt

\title{On the first sign change in Mertens' Theorem}
\author{Jan B\"uthe}
\address{Hausdorff Center for Mathematics, Endenicher Allee 62, 53115 Bonn}
\email{jan.buethe@hcm.uni-bonn.de}
\subjclass[2010]{11N05, 11Y35, 11M26}
\date{\today}

\begin{abstract}
 The function
$\sum_{p\leq x} \frac{1}{p} - \log\log(x) - M$ is known to change sign infinitely often, but so far all calculated values are positive. In this paper we prove that the first sign change occurs well before $\exp(495.702833165)$.
\end{abstract}
\maketitle

%%%%%%%%%%%%%%%%%%%%%%%%%%%%
\section{Introduction}

Mertens' Theorem states that
\begin{equation*}
\Delta_M(x) := \sum_{p\leq x} \frac{1}{p} - \log\log(x) - M = O(\log(x)^{-1})
\end{equation*}
 for $x\to\infty$, where $M= 0.26149\dots$ denotes the Mertens constant \cite{Mertens74}. Rosser and Schoenfeld observed that $\Delta_M(x)$ is always positive for $1\leq x\leq 10^8$ and posed the question whether this would always be the case \cite[p. 72f]{RS62}. This has been answered by Robin who showed that $\Delta_M(x)$ changes sign infinitely often \cite{Robin83}.

In this paper we show that the first sign change occurs before $\exp(495.702833165) = 1.909875\dots \times 10^{215}$. More specifically, we prove
 \begin{thm}\label{t:explicit-upper-bound}
 There exists an $x_0\in [\exp(495.702833109),\exp(495.702833165)]$ such that $\Delta_M(x)$ is negative for all $x\in [x_0-\exp(239.046541),x_0]$.
 \end{thm}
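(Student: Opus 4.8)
The plan is to reduce $\Delta_M(x)$ to an explicit-formula expression governed by a finite sum over the low-lying zeros of $\zeta$, to exhibit a point $x_0$ at which that sum is strongly negative with a generous margin, and then to note that $\Delta_M$ varies so little over the short interval $[x_0-\exp(239.046541),x_0]$ that it cannot return to positive values there. The starting point is the identity, obtained by differencing $\sum_{p\le x}p^{-1}-\log\log x$ between $x$ and $\infty$ and using the definition of $M$,
\begin{equation*}
 \Delta_M(x)=-\int_x^\infty\frac{d(\theta(t)-t)}{t\log t}.
\end{equation*}
Splitting $\theta(t)-t=(\psi(t)-t)-(\psi(t)-\theta(t))$ and feeding in the explicit formula $\psi(t)=t-\sum_\rho t^\rho/\rho-\log(2\pi)-\tfrac12\log(1-t^{-2})$ in differentiated form turns this into
\begin{equation*}
 \Delta_M(x)=\sum_\rho\Eit((\rho-1)\log x)+E_0(x),
\end{equation*}
where $\Eit((\rho-1)\log x)=\int_x^\infty t^{\rho-2}(\log t)^{-1}\,dt$, the variable $\rho=\beta+i\gamma$ runs over the non-trivial zeros, and $E_0(x)$ gathers the completely explicit elementary terms --- the prime-power correction $\sum_{k\ge2}\tfrac1k\sum_{p>x^{1/k}}p^{-k}$ and the integral $\int_x^\infty\frac{dt}{t^2(t^2-1)\log t}$ --- which are positive and of size $\asymp(\sqrt x\log x)^{-1}$.

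The analytic heart of the matter is to evaluate $\sum_\rho\Eit((\rho-1)\log x)$ at a prescribed $x$. I would use the verification of the Riemann Hypothesis for $|\gamma|\le T$ (known well beyond the needed range, so that $\rho=\tfrac12+i\gamma$ for these zeros) to compute the head $\sum_{0<\gamma\le T}\Eit((\rho-1)\log x)$ in rigorous interval arithmetic from high-precision ordinates and the asymptotic expansion of $\Eit$ with certified truncation error, and then to bound the tail $\sum_{\gamma>T}\Eit((\rho-1)\log x)$. The tail is only conditionally convergent --- its general term is $\asymp e^{-(1-\beta)\log x}(|\rho|\log x)^{-1}$, i.e. of size $\asymp\gamma^{-1}$ on the critical line --- so it cannot be estimated term by term; instead one applies Abel summation against the Riemann--von Mangoldt formula for $N(t)$, together with explicit bounds for $S(t)$ and the classical zero-free region. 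The factor $x^{it}$ carried by the leading behaviour of $\Eit((it-\tfrac12)\log x)$ then yields, after one integration by parts, an extra saving of $T^{-1}$, so the tail contributes $O((\sqrt x\log x)^{-1}(\log x)(\log T)/T)$, which for a suitable finite $T$ lies far below the size of the leading terms. Making this conditionally convergent tail rigorously negligible is the main obstacle.

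With these estimates in hand, the determination of $x_0$ is a finite search: $\Eit((\rho-1)\log x)$ and $\Eit((\bar\rho-1)\log x)$ combine, to leading order, into $-\tfrac{2}{\log x}\,x^{-1/2}\,\gamma^{-1}\sin(\gamma\log x)$, so one looks for $u_0=\log x_0$ near $495.70283$ at which $\sin(\gamma u_0)$ is simultaneously positive and not too small for the lowest ordinates $\gamma_1=14.13\ldots$, $\gamma_2=21.02\ldots$, $\gamma_3=25.01\ldots$; this makes $\sum_\rho\Eit((\rho-1)\log x_0)$ negative by more than $E_0(x_0)$ plus the accumulated error, so that $\Delta_M(x_0)<-\delta$ for an explicit $\delta\asymp(\sqrt{x_0}\log x_0)^{-1}$. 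Finally, for $x\in[x_0-h,x_0]$ with $h=\exp(239.046541)$ the trivial bound
\begin{equation*}
 |\Delta_M(x)-\Delta_M(x_0)|\le\sum_{x_0-h<p\le x_0}\frac1p+(\log\log x_0-\log\log(x_0-h))\le\frac{h}{x_0-h}\Bigl(1+\frac1{\log(x_0-h)}\Bigr)
\end{equation*}
is comfortably smaller than $\delta$ --- this is where it matters that $h$ is tiny next to $\sqrt{x_0}$ --- so $\Delta_M$ remains negative throughout the interval. Pinning $u_0$ down to enough decimal places to certify the bracket $[\exp(495.702833109),\exp(495.702833165)]$ --- about $\log_{10}T$ digits past the margin --- is then bookkeeping.
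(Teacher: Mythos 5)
Your skeleton (the Stieltjes identity for $\Delta_M$, the explicit formula producing $\sum_\rho\Eit((\rho-1)\log x)$, the positive prime-power bias of size $\asymp(\sqrt x\log x)^{-1}$, and the final short-interval argument using $h\ll\sqrt{x_0}$) matches the paper's Lemmas on $\pi_M^*$ and $r_M$ and its concluding step. But the central step --- evaluating $\Delta_M$ at a single prescribed $x_0$ and making the tail of the zero sum ``rigorously negligible'' by Abel summation against the Riemann--von Mangoldt formula --- does not work, and you have correctly identified it as the main obstacle without actually overcoming it. Writing $dN(t)$ as its smooth part plus $dS(t)$, the smooth part does gain from the oscillation of $x^{it}$, but the fluctuation term does not: integrating $\int_T^\infty t^{-1}x^{it}\,dS(t)$ by parts throws the derivative onto $x^{it}/t$ and produces $\log x\int_T^\infty |S(t)|\,t^{-1}\,dt$, which diverges (and repeating with $S_1(t)=\int_0^tS$ only multiplies by further factors of $\log x$ without gaining decay). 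There is no known bound, even assuming RH, of the shape $O(\log x\log T/T)$ for $\sum_{\gamma>T}x^{i\gamma}/\rho$ at a fixed $x$; to push the unsmoothed truncation error below the available margin of about $10^{-4}$ in units of $(\sqrt x\log x)^{-1}$ one would need $T$ of order roughly $e^{\omega/2}\approx e^{248}$, hopelessly beyond any computation. A second, independent failure is the range above the RH-verification height $H\approx10^{11}$: there the only constraint is the classical zero-free region, and a single hypothetical zero just above $H$ with $\beta=1-c_1/\log\gamma$ would contribute about $x^{\beta-1}/(|\gamma|\log x)$, which exceeds the allowed error $10^{-4}/(\sqrt x\log x)$ by a factor on the order of $10^{98}$; no unsmoothed pointwise argument can suppress this.

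These two obstacles are exactly why the paper does not evaluate $\Delta_M$ at a point but instead integrates $y\,e^{y/2}\bigl[\pi_M(e^y)-\log y-M\bigr]$ against the compactly supported kernel $K_{c,\eps}$ with $c=280>\omega/2$ and $\eps=2.8\times10^{-8}$. The zero sum then appears weighted by the Logan function $\ell_{c,\eps}$, which is essentially $1$ for $|\gamma|\le c/\eps$ and carries the factor $1/\sinh(c)\approx 2e^{-280}$ beyond, so the tail becomes absolutely convergent with fully explicit bounds ($\mathcal E_1,\mathcal E_2,\mathcal E_3$), and the decisive comparison $e^{h\omega/2}/\sinh(c)\approx e^{248-280}$ neutralizes possible off-line zeros above $H$ without any appeal to a zero-free region. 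The price of the smoothing is precisely the form of Theorem \ref{t:explicit-upper-bound}: one concludes only that $\Delta_M$ is negative (by at least $0.000154/(\sqrt x\log x)$) somewhere in $[\exp(\omega-\eps),\exp(\omega+\eps)]$, not at a computed $x_0$; the bracket in the statement is the smoothing window, not a numerical-precision issue as your last sentence suggests. To repair your proposal you would either have to supply a currently unavailable cancellation estimate for $\sum_{\gamma>T}x^{i\gamma}/\rho$ at fixed $x$ together with RH far beyond verified heights, or reintroduce a kernel average --- at which point you have the paper's argument.
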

 
This problem is similar to bounding the Skewes number, the number in $[2,\infty)$ where the first sign change of $\Delta(x) = \pi(x)-\li(x)$ occurs \cite{Skewes1933}; this number is by now known to lie between $10^{19}$ \cite{BuetheDiss} and $\exp(727.951335792)$ \cite{STD15}. The functions $\Delta(x)$ and $\Delta_M(x)$ are closely related and the Prime Number Theorem, $\Delta(x) = o(\li(x))$ for $x\to\infty$, is in fact equivalent to $\Delta_M(x) = o(\log(x)^{-1})$ for $x\to\infty$. But since $\Delta(x)$ and $\Delta_M(x)$ are biased in opposite directions there is no correlation between the sign changes of the two functions. On the Riemann Hypothesis, sign changes of $\Delta_M(x)$ rather occur at points where $\Delta(x) \approx -2\sqrt{x}/\log(x)$.
 
Theorem \ref{t:explicit-upper-bound} is proven by an adaption of the Lehman method for bounding the Skewes number \cite{Lehman66}, using explicit formulas and numerical approximations to part of the zeros of the Riemann Zeta Function from \cite{FKBJ}. In doing so, the kernel function in Lehman's method is replaced by the Logan function \cite{Logan88}, which appears to be more suitable for this problem. This is done in such generality that it can easily be reapplied to the original Lehman method.

%%%%%%%%%%%%%
\section{Notations}
As usual $\zeta(s)$ denotes the Riemann zeta function and zeros of $\zeta(s)$ are denoted by $\rho = \beta + i\gamma$ with $\beta,\gamma\in\R$. The Euler constant is denoted by $C_0=0.57721\dots$ and the Mertens constant by
\begin{equation}\label{e:Mertens-constant}
M=C_0 -  \sum_{p}\sum_{m=2}^\infty \frac{1}{m\,p^m}= 0.26149\dots.
\end{equation}
We use the symbol $\nsum$ to define normalized summatory functions, i.e. we define
\begin{equation*}
\nsum_{x< n < y} a_n := \frac 12 \sum_{x< n<y} a_n + \frac 12 \sum_{x\leq n\leq y} a_n.
\end{equation*}
Moreover, we define the Mertens prime-counting functions
\begin{equation*}
\pi_M(x) =\nsum_{p<x} \frac 1p \eand \pi_M^*(x) = \sum_{m=1}^\infty \frac{\pi_M(x^{1/m})}{m}.
\end{equation*}
The Fourier transform of a function $f$ is denoted by $\hat f$ and defined by
\begin{equation*}
\hat f(x) = \int_{-\infty}^\infty f(t) e^{-itx}\, dt.
\end{equation*}
Finally, we will use Turing's big theta notation for explicit estimates and write $f(x) = \Theta(g(x))$ for $\abs{f(x)} \leq g(x)$.

\section{Description of the Method}
The method we use is similar to the Lehman method for finding regions where $\pi(x)-\li(x)$ is positive \cite{Lehman66}. We aim to calculate upper bounds for a weighted mean value
\begin{equation}\label{e:weighted-mean}
\int_{\omega-\eps}^{\omega+\eps} K(y-\omega) y e^{y/2} \bigl[\pi_M(e^y) - \log(y) - M\bigr]\, dy ,
\end{equation}
where $K(y)$ is a non-negative kernel function. By using explicit formulas this mean value can be expressed as a sum over the non-trivial zeros of $\zeta(s)$, which can be approximated numerically. Then, if an $\omega$ can be found for which the value in \eqref{e:weighted-mean} is negative, there must exist an $x \in [\exp(\omega - \eps),\exp(\omega +\eps)]$ such that $\pi_M(x) - \log\log(x) - M$ is negative.

Lehman's method uses the Gaussian function as a kernel function but we prefer to use dilatations of the function
\begin{equation*}
K_{c} (y) := 
\begin{cases}
\frac{c}{2\sinh(c)}I_0(c\sqrt{1-y^2}) &\abs{y}<1, \\
0 &\text{otherwise,}
\end{cases}
\end{equation*}
where $I_0(t) := \sum_{n=0}^\infty (t/2)^{2n}/(n!)^2$ denotes the $0$-th modified Bessel function. The Fourier transform of $K_c$ is given by the Logan function (see \cite[Proposition 4.1]{FKBJ})
\[
\hat K_{c}(x) = \ell_{c}(x) := \frac{c}{\sinh c} \frac{\sin(\sqrt{t^2-c^2})}{\sqrt{t^2-c^2}},
\]
which satisfies an optimality property well-suited for this problem \cite{Logan88}, and which outperforms the Gaussian function in the similar context of calculating the prime-counting function analytically \cite{FKBJ}.

We define
\begin{equation*}
K_{c,\eps}(y) := \frac{1}{\eps}K_c(y/\eps) \quad\text{and}\quad \ell_{c,\eps}(x) := \hat K_{c,\eps}(x) = \ell_{c}(\eps x).
\end{equation*}
Then our main result is

\begin{thm}\label{t:main-theorem}
Let $0< \eps< 10^{-3}$, $c\geq 3$, $\omega - \eps > 200$, and let $H\geq c/\eps$ be a number such that $\beta = 1/2$ holds for all zeros $\rho=\beta + i\gamma$ of the Riemann zeta function with $0<\gamma \leq H$. Furthermore, let $h=0$ if the Riemann hypothesis holds and $h=1$ otherwise. Then we have
\begin{multline}\label{e:mean-upper-bound2}
\int_{\omega-\eps}^{\omega+\eps} K_{c,\eps}(y-\omega) \, y \, e^{y/2} \bigl[\pi_M(e^y) - \log(y) - M\bigr]\, dy \\ \leq
\sum_{\abs{\gamma} \leq c/\eps} e^{-i\gamma \omega} \ell_{c,\eps}(\gamma)\Bigl(\frac{1}{\rho} - \frac{1}{\omega\rho^2}\Bigr) \\
 +1 + 5.4\times 10^{-10} + \mathcal E_1 + \mathcal E_2 + \mathcal E_3,
\end{multline}
where
\begin{align}
\mathcal E_1 &\leq  0.33 \, e^{h \omega/2} \, \frac{e^{0.71\sqrt{c\eps}}}{\sinh c} \log(3c) \log\Bigl(\frac{c}{\eps}\Bigr) , \label{e:E1-bound}\\
\mathcal E_2 &\leq \frac{3.36+126\,\eps}{1000\,\omega^2} + 2.8 \Bigl(\frac{e}{2H}\Bigr)^{\omega/2-1} \log(H) ,\label{e:E2-bound} \\
\intertext{and}
\mathcal E_3 &\leq \frac{e^{\omega/2}}{1.99 H}\log(H) \left( \frac{c\, e^{3.12\sqrt{c\eps}}}{\omega \sinh(c)} +  \Bigl(\frac{e\eps}{\omega}\Bigr)^{\omega/2} \right).\label{e:E3-bound}
\end{align}
Moreover, if $a\in(0,1)$ satisfies $ac/\eps \geq 10^3$ in addition to the previous conditions, then
\begin{equation}\label{e:ace}
\sum_{\frac{ac}{\eps} <\abs{\gamma} \leq  \frac c\eps}	\abs{ e^{-i\gamma \omega} \ell_{c,\eps}(\gamma)\Bigl(\frac{1}{\rho} - \frac{1}{\omega\rho^2}\Bigr)} \leq \frac{0.32 + 3.51 c\eps}{c a^2} \log\Bigl(\frac c\eps\Bigr) \frac{\cosh(c\sqrt{1-a^2})}{\sinh(c)}.
\end{equation}
\end{thm}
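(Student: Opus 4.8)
The plan is to deduce \eqref{e:mean-upper-bound2} from an explicit formula for $\pi_M$. The natural starting point is $\pi_M(x)=\int_{2^-}^{x}t^{-1}\,d\pi(t)$, into which one feeds Riemann's explicit formula for $\pi$, written by M\"obius inversion as $\pi(x)=\sum_{m\ge1}\frac{\mu(m)}{m}(\li(x^{1/m})-\sum_{\rho}\li(x^{\rho/m})+(\text{elementary terms}))$. Substituting $t=e^{v}$ and using $\tfrac{d}{dt}\li(t^{s})=t^{s-1}/\log t$, each $\li(t^{s})$ becomes an exponential integral $\int_{2}^{x}t^{s-2}(\log t)^{-1}\,dt=\Ei((s-1)y)-\Ei((s-1)\log 2)$, where $y=\log x$, and the case $s=1$ reproduces the term $\log y$. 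After the normalisation that forces $\pi_M(e^{y})-\log y-M\to0$ one obtains
\[
\pi_M(e^{y})-\log y-M=-\sum_{\rho}\Ei((\rho-1)y)+(\text{the $m\ge2$ prime-power terms})+(\text{trivial zeros}),
\]
the $m\ge2$ block being dominated by the $m=2$ term $-\tfrac12\li(x^{1/2})$, that is by $-\tfrac12\Ei(-y/2)$. I would keep the zero sum truncated at $|\gamma|\le H$ and carry the Perron-type truncation remainder along explicitly.

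Inserting this into the weighted mean and processing the blocks is the heart of the proof. For the zero sum I use $\Ei(w)=\frac{e^{w}}{w}(1+\tfrac1w)+O(|w|^{-3}e^{\operatorname{Re}w})$, valid in the sector occupied by the points $(\rho-1)y$; after multiplying by $y\,e^{y/2}$, a single zero $\rho=\tfrac12+i\gamma$ contributes $-\frac{e^{i\gamma y}}{\rho-1}-\frac{e^{i\gamma y}}{(\rho-1)^{2}y}+O(\frac{1}{|\gamma|y^{2}})$. Expanding $1/y$ around $1/\omega$, integrating against $K_{c,\eps}(y-\omega)$ (so $e^{i\gamma y}$ turns into $e^{i\gamma\omega}\hat K_{c,\eps}(-\gamma)=e^{i\gamma\omega}\ell_{c,\eps}(\gamma)$ because $\ell_{c,\eps}$ is even), and finally using the functional equation to replace $\rho-1$ by $-\bar\rho$ and reindexing $\rho\mapsto\bar\rho$, the two leading terms combine precisely into $\sum_{|\gamma|\le H}e^{-i\gamma\omega}\ell_{c,\eps}(\gamma)\Bigl(\tfrac1\rho-\tfrac1{\omega\rho^{2}}\Bigr)$, the $\Ei$-remainder and the $1/y$-approximation being deferred to the error terms. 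The $m=2$ block yields $\int K_{c,\eps}(y-\omega)\,y\,e^{y/2}(-\tfrac12\Ei(-y/2))\,dy=\int K_{c,\eps}(y-\omega)(1-\tfrac2y+\cdots)\,dy<1$, since $\ell_{c,\eps}(0)=1$ and the correction is negative and $O(\omega^{-1})$; with the negligible $m\ge3$ and trivial-zero pieces this is bounded by $1+5.4\times10^{-10}$.

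It then remains to bound four kinds of error and identify them with $\mathcal E_{1},\mathcal E_{2},\mathcal E_{3}$. (i) \emph{Truncating the main sum from $|\gamma|\le H$ down to $|\gamma|\le c/\eps$}: here $|\ell_{c,\eps}(\gamma)|=|\ell_{c}(\eps\gamma)|\le\frac{c}{\sinh c}\min(1,((\eps\gamma)^{2}-c^{2})^{-1/2})$ for $|\eps\gamma|\ge c$, and since $\int_{c/\eps}^{\infty}\frac{dt}{t\sqrt{(\eps t)^{2}-c^{2}}}=\frac{\pi}{2c}$, an explicit Riemann--von Mangoldt bound $N(T+1)-N(T)=O(\log T)$ gives $\sum_{c/\eps<|\gamma|}|\ell_{c,\eps}(\gamma)|(|\gamma|^{-1}+\omega^{-1}|\gamma|^{-2})\ll\frac{\log(c/\eps)}{\sinh c}$, which is the shape of \eqref{e:E1-bound}; the extra $\log(3c)$ and $e^{0.71\sqrt{c\eps}}$ come from making these bounds effective, and $e^{h\omega/2}$ absorbs any zeros beyond $H$ with $\beta>\tfrac12$. (ii) Replacing $1/y$ by $1/\omega$ in the second leading term costs $O(\eps\omega^{-2})$ per zero, hence $O(\omega^{-2})$ after summation against the decaying $\ell_{c,\eps}$ --- the $\frac{3.36+126\eps}{1000\omega^{2}}$ part of \eqref{e:E2-bound} --- while the as-yet-unverified part of RH together with the trivial zeros gives the rapidly decaying term $2.8(\tfrac{e}{2H})^{\omega/2-1}\log H$. (iii) The Perron/explicit-formula truncation at height $H$: because $\ell_{c,\eps}$ is entire of exponential type $\eps$ and $K_{c}\ge0$, an estimate adapting \cite{Lehman66} and \cite{FKBJ} makes this error decay like $e^{\omega/2}H^{-1}\log H$, which is $\mathcal E_{3}$ in \eqref{e:E3-bound}. (iv) The zero-sum corrections from the prime powers, e.g.\ $\tfrac12\sum_{\rho}\li(x^{\rho/2})$, carry a factor $e^{(\beta-1)\omega/2}\le e^{-\omega/4}$ on the verified range and are absorbed into the constant above.

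Finally, for \eqref{e:ace} I would argue directly: if $ac/\eps<|\gamma|\le c/\eps$ then $ac<\eps|\gamma|\le c$, so $c^{2}-(\eps\gamma)^{2}\le c^{2}(1-a^{2})$, and since $u\mapsto u^{-1}\sinh u$ is increasing and at most $\cosh u$, $|\ell_{c,\eps}(\gamma)|\le\frac{c}{\sinh c}\cosh(c\sqrt{1-a^{2}})$; combining this with $|\tfrac1\rho-\tfrac1{\omega\rho^{2}}|\le\tfrac1{|\gamma|}+\tfrac1{\omega|\gamma|^{2}}$ and summing over the zeros in that range by the same density estimate (using $ac/\eps\ge10^{3}$ to discard the lower-order tails) yields \eqref{e:ace}. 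The step I expect to be hardest is (iii): one must bound the \emph{smoothed} Perron truncation error sharply enough that $\mathcal E_{3}$ really is of order $e^{\omega/2}H^{-1}$ and not $e^{\omega/2}(\log x)^{a}H^{-1}$, and this is exactly where the exponential type of the Logan function and the non-negativity of $K_{c}$ must be exploited; making the $\Ei$-expansion effective and controlling the conditional convergence of the zero sum at the edge of the critical strip is a close second.
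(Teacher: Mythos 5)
Your overall architecture parallels the paper's: an exact explicit formula, extraction of two leading terms per zero so that smoothing against $K_{c,\eps}$ converts $e^{i\gamma y}$ into $e^{-i\gamma\omega}\ell_{c,\eps}(\gamma)$ (with the conjugation/functional-equation reindexing giving $\frac1\rho-\frac1{\omega\rho^2}$), tail sums over $|\gamma|>c/\eps$ controlled by the decay of the Logan function, and the $+1$ coming from the square-of-primes bias. One piece is genuinely different and worth noting: the paper obtains the $1+5.4\times10^{-10}$ by bounding $r_M(x)=\pi_M(x)-\pi_M^*(x)-M+C_0$ elementarily, which needs the sharp numerical input $|\psi(t)-t|\le1.752\times10^{-10}\,t$ from \cite{BuetheB}, whereas you read the bias off the $-\frac12\li(x^{1/2})$ term of the M\"obius-inverted Riemann formula, at the price of having to control the $m\ge2$ zero sums you thereby generate (these carry $e^{(\beta/m-1/2)\omega}\le1$ and, after smoothing, a $c/\sinh(c)$-type factor, so they are harmless but must fit into the stated constant). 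Your treatment of $\mathcal E_1$, of the $1/y\to1/\omega$ replacement, and your direct argument for \eqref{e:ace} are consistent in shape with the paper, which delegates the corresponding Logan-function sums to quoted lemmas of \cite{BuetheA,BuetheB}.

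The genuine gap is exactly at the step you flag as hardest: the zeros with $|\gamma|>H$. You propose to truncate the explicit formula at height $H$ and bound a ``Perron-type truncation remainder'', asserting it decays like $e^{\omega/2}H^{-1}\log H$, ``which is $\mathcal E_3$''. That misreads \eqref{e:E3-bound}: $\mathcal E_3$ carries the extra factor $\frac{c\,e^{3.12\sqrt{c\eps}}}{\omega\sinh(c)}+\bigl(\frac{e\eps}{\omega}\bigr)^{\omega/2}$, of size roughly $e^{-c}$, and this factor is the whole point --- in the application ($\omega\approx495.7$, $c=280$, $H=10^{11}$) a bound of size $e^{\omega/2}H^{-1}\log H\approx10^{98}$ would be useless, and in any case proving \eqref{e:mean-upper-bound2} with that weaker $\mathcal E_3$ is not proving the theorem as stated. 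The same problem afflicts \eqref{e:E2-bound}: with only a two-term $\Ei$ expansion, the unconditional ($h=1$) remainder from $|\gamma|>H$ is of order $e^{\omega/2}\log(H)/(\omega^2H^2)$, astronomically larger than $2.8(e/2H)^{\omega/2-1}\log H$. The paper's mechanism, absent from your sketch, is to keep all zeros in the exact formula and push the expansion of the smoothed $\Eit$-terms to order $k=m=\floor{\omega/2}$ (Lemma \ref{l:I-asymp}), bounding the derivative terms $\ell_{c,\eps}^{(n)}\bigl(\gamma+i(\tfrac12-\beta)\bigr)$ via Cauchy's formula (Lemma \ref{l:logan-bound}, which produces $c\,e^{3.12\sqrt{c\eps}}/\sinh c$) and the $\gamma$-tails via \eqref{e:lehman-lemma2}; it is this $k\sim\omega/2$ that beats the potential $e^{\omega/2}$ from unverified zeros. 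A pointwise truncation error of the unsmoothed formula cannot be repaired by smoothing afterwards, since the oscillation has already been discarded; so without this (or an equivalent) device your proposal does not reach \eqref{e:E2-bound}--\eqref{e:E3-bound}.
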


The proof needs some preparation.

\section{The explicit formula for $\pi^*_M(x)$}
The first ingredient is the explicit formula for $\pi^*_M(x)$. We define the auxiliary function
\begin{equation*}
\Eit(z) = \int_{0}^\infty \frac{e^{z-t}}{z-t}\, dt,
\end{equation*}
which coincides with the exponential integral $\Ei(z)$ in $\R\setminus\{0\}$, and which occurs naturally in explicit formulas for prime-counting functions.

\begin{lemma}\label{l:explicit-formula}
Let $x>1$. Then
\begin{equation}\label{e:explicit-formula}
\pi_M^*(x) = \log\log(x) + C_0 - \zsum_\rho \Eit(-\rho\log x) + \int_{x}^\infty \frac{dt}{t^2\log(t)(t^2-1)},
\end{equation}
where the star indicates that the sum over zeros is calculated as
\begin{equation*}
\lim_{T\to\infty} \sum_{\abs{\gamma}<T} \Eit(-\rho\log x).
\end{equation*}
\end{lemma}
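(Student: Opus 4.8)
The plan is to realise $\pi_M^*$ as the summatory function of the Dirichlet coefficients of $\log\zeta(s+1)$ and then deform a contour. Unwinding the definition, $\pi_M^*(x)=\nsum_{p^k<x}1/(kp^k)$, so the Euler product gives
\[
\log\zeta(s+1)=\sum_{p}\sum_{k\ge1}\frac1{k}\,(p^k)^{-(s+1)}=\sum_{p}\sum_{k\ge1}\frac1{k\,p^k}\,(p^k)^{-s}=\int_{1^-}^\infty x^{-s}\,d\pi_M^*(x)
\]
for $\operatorname{Re}s>0$ (where the series converges absolutely); an integration by parts, legitimate since $\pi_M^*(x)\le\sum_{n<x}1/n=O(\log x)$, turns this into $\log\zeta(s+1)=s\int_1^\infty\pi_M^*(x)x^{-s-1}\,dx$. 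Perron's formula then yields, for any $c>0$,
\[
\pi_M^*(x)=\frac1{2\pi i}\lim_{T\to\infty}\int_{c-iT}^{c+iT}\log\zeta(s+1)\,\frac{x^s}{s}\,ds .
\]

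Next I would deform the segment $[c-iT,c+iT]$ onto the remaining three sides of the rectangle with corners $c\pm iT$ and $-U\pm iT$, collecting the singularities of $\log\zeta(s+1)\,x^s/s$ inside. Near $s=0$ one has $\log\zeta(s+1)=-\log s+C_0 s+O(s^2)$ (from $\zeta(s+1)=s^{-1}+C_0+O(s)$), so a Hankel loop around the branch point there evaluates — via $\int_0^\infty e^{-su}\log u\,du=-(C_0+\log s)/s$ — to $\log\log x+C_0$. Every nontrivial zero $\rho$ of $\zeta$ produces a logarithmic branch point of $\log\zeta(s+1)$ at $s=\rho-1$, whose loop contributes $-\Eit((\rho-1)\log x)$; since the nontrivial zeros are invariant under $\rho\mapsto1-\rho$ these total $-\zsum_\rho\Eit(-\rho\log x)$. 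The trivial zeros $-2n$ ($n\ge1$) give branch points at $s=-(2n+1)$ whose contributions sum, using $\tfrac{d}{dx}\Eit(-k\log x)=x^{-k-1}/\log x$ and $\sum_{n\ge1}x^{-2n-2}=(x^2(x^2-1))^{-1}$, to $-\sum_{n\ge1}\Eit(-(2n+1)\log x)=\int_x^\infty\frac{dt}{t^2(t^2-1)\log t}$. Adding the three gives the right-hand side of \eqref{e:explicit-formula}.

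It then remains to dispose of the three added sides. The far-left side $[-U-iT,-U+iT]$ has length $2T$, and the functional equation gives $\log\zeta(s+1)\ll|s|\log|s|$ for $\operatorname{Re}s\le-U$, so its contribution is $\ll T\,x^{-U}\log(T+U)\to0$ as $U\to\infty$ for fixed $T$. Choosing $T$ to keep a bounded distance from the ordinates of the zeros, so that $\log\zeta(\sigma+1\pm iT)\ll\log T$ on the horizontal sides, those contribute $\ll\frac{\log T}{T}\int_{-U}^{c}x^{\sigma}\,d\sigma\ll\frac{x^c\log T}{T\log x}\to0$. Letting $U\to\infty$ and then $T\to\infty$ along a suitable sequence produces \eqref{e:explicit-formula}; the prescribed order $\lim_{T\to\infty}\sum_{|\gamma|<T}$ in the zero sum is precisely the order in which the branch points $s=\rho-1$ enter the rectangle as $T$ grows.

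I expect the real work to lie in making this deformation rigorous in the presence of the infinitely many logarithmic branch points of $\log\zeta(s+1)$ at the zeros of $\zeta$ — one must replace the left side of the rectangle by a union of Hankel loops around the cuts plus a vertical remainder, and control the resulting conditionally convergent sum — together with the bookkeeping in Perron's formula and the interchange of the limit with the sum over zeros. This is essentially the delicate content of the classical Riemann--von Mangoldt formula for $\Pi(x)=\nsum_{p^k<x}1/k$; alternatively one could invoke that formula and deduce \eqref{e:explicit-formula} from the Stieltjes identity $\pi_M^*(x)=\int_{1^-}^x t^{-1}\,d\Pi(t)$, at the cost of tracking the mutually cancelling singularities of the integrand near $t=1$ and $t=x$.
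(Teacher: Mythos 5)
Your route is genuinely different from the paper's. You apply Perron's formula to $\log\zeta(s+1)$ and deform the contour, collecting Hankel-loop contributions at the branch points $s=0$, $s=\rho-1$ and $s=-(2n+1)$. The paper instead writes $\pi_M^*(x)=\int_1^\infty\psi(x,r)\,dr$ with $\psi(x,r)=\nsum_{p^m<x}\log(p)\,p^{-mr}$ and integrates Landau's explicit formula for $\psi(x,r)$ termwise over $r\in[1,\infty)$: the main term comes from $\int_1^\infty\bigl(\frac{x^{1-r}}{1-r}-\frac{\zeta'}{\zeta}(r)\bigr)dr=\log\log x+C_0$, each zero contributes $\int_1^\infty\frac{x^{\rho-r}}{\rho-r}\,dr=\Eit((\rho-1)\log x)$, and the trivial zeros give the tail integral — so $\log\zeta$ is never treated as an analytic function and no branch cuts appear. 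Your local computations do check out: the merged singularity at $s=0$ (pole of the Perron kernel plus logarithmic branch point of $\log\zeta(s+1)$) indeed yields $\log\log x+C_0$; the rearrangement $\zsum_\rho\Eit((\rho-1)\log x)=\zsum_\rho\Eit(-\rho\log x)$ is legitimate because $\rho\mapsto 1-\rho$ permutes the nontrivial zeros and preserves $\abs{\gamma}$, hence respects the prescribed summation order (the paper uses the same step); and the trivial-zero contributions sum to $\int_x^\infty\frac{dt}{t^2(t^2-1)\log t}$ exactly as you compute.

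The gap is the one you name yourself: the decisive analytic step — justifying the deformation of the Perron contour across infinitely many logarithmic branch points of $\log\zeta(s+1)$ (choice of cuts, replacing the left edge by loops around them, horizontal-edge estimates along well-chosen ordinates, and the interchange of the $T,U$-limits with the only conditionally convergent sum over zeros) — is announced but not carried out, and this is precisely where the difficulty of a von Mangoldt-type proof lives. Your fallback, deducing the lemma from the classical formula for $\Pi(x)$ via $\pi_M^*(x)=\int_{1^-}^x t^{-1}\,d\Pi(t)$, is viable but likewise left at the level of a plan (termwise integration of the conditionally convergent zero sum over a compact range, plus reassembling the constants into $C_0$). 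The paper's choice of integrating in the exponent variable $r$ outsources exactly this hard analysis to the cited formula of Landau for $\psi(x,r)$ and reduces the lemma to three short integrations; to make your write-up a proof you would either have to execute the full contour analysis (essentially reproving the Riemann--von Mangoldt formula) or, more economically, start as the paper does from an explicit formula in which the zero sum is already established.
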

\begin{proof}
The argument is similar to the original proof of the Riemann explicit formula in \cite{Mangoldt95}. Let
\begin{equation}\label{e:Cheb-psi}
\psi(x,r) = \nsum_{p^m<x} \frac{\log p}{p^{mr}}.
\end{equation}
Then we have
\[
\pi^*_M(x) = \int_{1}^\infty \psi(x,r)\, dr.
\]
From \cite[(39)]{Landau1908} we get the explicit formula
\[
\psi(x,r) = \frac{x^{1-r}}{1-r} - \zsum_{\rho}\frac{x^{\rho-r}}{\rho-r} - \sum_{n=1}^\infty \frac{x^{-2n-r}}{-2n-r} - \frac{\zeta'}{\zeta}(r).
\]
Since $\Ei(-x) = \log(x) + C_0 + o(x)$ for $x\searrow 0$ \cite[p. 40]{Olver97}, and since $\log \zeta(1+\eps) = -\log(\eps) +o(1)$ for $\eps\searrow 0$ we have  
\[
\int_{1}^\infty \frac{x^{1-r}}{1-r} - \frac{\zeta'}{\zeta}(r)\, dr = \lim_{\eps\searrow 0} \Bigl[\Ei(-\eps\log x) + \log\zeta(1+\eps)\Bigr] = \log\log(x) + C_0.
\]
The sum over zeros takes the form
\[
\int_{1}^\infty \zsum_\rho \frac{x^{\rho-r}}{\rho-r}\, dr = \zsum_\rho \Eit((\rho-1)\log x) = \zsum_\rho \Eit(-\rho\log x),
\]
and for the sum over the trivial zeros we find
\[
\int_1^\infty \sum_{n=1}^\infty \frac{x^{-2n-r}}{2n+r}\, dr = \int_{1}^\infty \sum_{n=1}^\infty x^{-(2n+1) r} \frac{dr}{r} = \int_1^\infty \frac{x^{-3r}}{1-x^{-2r}}\, dr = \int_x^\infty \frac{dt}{t^2\log(t)(t^2-1)}.\qedhere
\]
\end{proof}

\section{The difference $\pi^*_M(x) - \pi_M(x)$}
By definition of the Mertens constant \eqref{e:Mertens-constant} we have
\begin{equation*}
\pi_M(x) = \pi^*_M(x) +M - C_0 + r_M(x),
\end{equation*}
where
\begin{equation*}
r_M(x) = \nsum_{\substack{p^m > x\\m\geq 2}} \frac{1}{m p^m}.
\end{equation*}
The term $r_M(x)$ is responsible for the positive bias in Mertens' Theorem and needs to be bounded from above.

\begin{lemma}\label{l:difference}
Let $\log(x)>200$. Then
\begin{equation*}
 r_M(x) \leq \frac{1 + 5.3\times 10^{-10}}{\sqrt{x}\log x}.
\end{equation*}
\end{lemma}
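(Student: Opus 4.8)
\emph{Step 1: reduction to the square term.} The plan is to first separate off $m=2$,
\[
r_M(x)=\frac12\nsum_{p>\sqrt x}\frac1{p^2}+\sum_{m\ge 3}\frac1m\nsum_{p^m>x}\frac1{p^m},
\]
and dispose of the rest crudely. For each fixed $m\ge 3$ one has $\nsum_{p^m>x}p^{-m}\le\sum_{n\ge x^{1/m}}n^{-m}\le\frac1{(m-1)x^{1-1/m}}+\frac1x$ when $x^{1/m}\ge 2$, and $\nsum_{p^m>x}p^{-m}\le\zeta(m)-1\le 2^{1-m}$ otherwise; summing over $m$ gives a quantity of order $x^{-2/3}$, i.e.\ of relative size $O((\log x)\,x^{-1/6})<10^{-12}$ once $\log x\ge 200$. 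Also $\nsum_{p>\sqrt x}$ differs from $\sum_{p>\sqrt x}$ by at most one term $\le\frac1{2x}$, which is negligible. So it remains to bound $\frac12\sum_{p>\sqrt x}p^{-2}$ by $\frac1{\sqrt x\log x}$ up to an error well below $\frac{5.3\times 10^{-10}}{\sqrt x\log x}$.

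\emph{Step 2: partial summation, keeping the boundary term.} Write $y=\sqrt x\ge e^{100}$. Abel summation against the Chebyshev function $\vartheta$ gives
\[
\sum_{p>y}\frac1{p^2}=-\frac{\vartheta(y)}{y^2\log y}+\int_y^\infty\frac{\vartheta(t)\,(2\log t+1)}{t^3(\log t)^2}\,dt .
\]
It is essential to keep the boundary term: discarding it and using only $\sum_{p>y}p^{-2}\le 2\int_y^\infty\pi(t)t^{-3}\,dt$ loses a factor $2$ and wrecks the bound. Into this identity I would substitute a known explicit estimate $\lvert\vartheta(t)-t\rvert\le c_k\,t/(\log t)^k$ valid for $t\ge e^{100}$, or, to the same effect, the sharpened Chebyshev bounds $\frac{t}{\log t}\bigl(1+\tfrac1{\log t}\bigr)\le\pi(t)\le\frac{t}{\log t}\bigl(1+\tfrac1{\log t}+\tfrac{b}{(\log t)^2}\bigr)$, which already hold well below $e^{100}$.

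\emph{Step 3: evaluating the integrals.} Repeated integration by parts turns each $\int_y^\infty t^{-2}(\log t)^{-j}\,dt$ into $\frac1{y(\log y)^j}$ minus $j$ times the next such integral. The key cancellation is that the contributions of order $\frac1{y\log y}$ coming from the integral and from $-\vartheta(y)/(y^2\log y)$ cancel exactly — as they must, since replacing $\vartheta(t)$ by $t$ collapses the right-hand side to $\int_y^\infty\frac{dt}{t^2\log t}$, which is the value $\sum_{p>y}p^{-2}$ would take were $\pi(t)$ equal to $\li(t)$. What survives is
\[
\sum_{p>y}\frac1{p^2}\le\frac1{y\log y}-\frac1{y(\log y)^2}+(\text{error}),
\]
with error of relative size $O(c_k(\log y)^{-k})$. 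Taking $y=\sqrt x$, $\log y=\tfrac12\log x$, and combining with Step 1, this yields $r_M(x)\le\frac1{\sqrt x\log x}+(\text{error})\le\frac{1+5.3\times 10^{-10}}{\sqrt x\log x}$ once the $O((\log x)^{-1})$-smaller corrections are collected and the constant is verified at the endpoint $\log x=200$.

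\emph{Where the difficulty lies.} There is no conceptual obstacle here, only a quantitative one, with two facets: (i) the boundary term $-\vartheta(\sqrt x)/(x\log\sqrt x)$ must not be dropped, as it is of the same order as the integral; and (ii) ``$\pi(t)=\li(t)+\text{small}$''-type prime number theorem error estimates are useless at $t=e^{100}$ — they are orders of magnitude too weak — so one must work with the Chebyshev-sharpened two-sided bounds (or a $\vartheta$-estimate carrying enough logarithmic factors), and it is these together with the exact cancellation above that pin the leading constant to $1$. The remaining work is bookkeeping with the iterated logarithmic integrals and checking that the residual error stays below $5.3\times 10^{-10}\,(\sqrt x\log x)^{-1}$ throughout $\log x\ge 200$.
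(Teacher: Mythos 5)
Your argument is correct and reaches the stated bound, but it takes a genuinely different route at the decisive step. The paper isolates the $m=2$ term and disposes of $3\le m\le \log x$ and $m>\log x$ essentially as you do; for $\sum_{p>\sqrt{x}}p^{-2}$, however, it sums against the normalized Chebyshev function $\psi$, writes $r(t)=\psi(t)-t$, bounds the main term crudely by $\int_{\sqrt{x}}^\infty \frac{dt}{t^2\log t}\le \frac{2}{\sqrt{x}\log x}$, and feeds in the very strong estimate $\abs{\psi(t)-t}\le 1.752\times 10^{-10}\,t$ for $t\ge e^{100}$ from \cite{BuetheB} (a bound resting on large-scale verification of RH), so that the constant $5.3\times 10^{-10}$ is literally $3\eps$ plus the negligible higher-power contributions. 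You instead keep the second-order term of the logarithmic integral, $-\frac{1}{y(\log y)^2}$ with $y=\sqrt{x}$, which buys a relative slack of about $\frac{1}{\log y}=10^{-2}$; as a result, classical Rosser--Schoenfeld/Dusart-type inputs such as $\pi(t)\le \frac{t}{\log t}\bigl(1+\frac{1}{\log t}+\frac{2.51}{(\log t)^2}\bigr)$ and $\pi(t)\ge \frac{t}{\log t}\bigl(1+\frac{1}{\log t}\bigr)$ are amply sufficient and in fact give roughly $\frac{0.991}{\sqrt{x}\log x}$, better than the stated constant. So your proof needs only classical explicit estimates where the paper leans on its computational input, at the cost of one more round of bookkeeping and of pinning down the constants you left symbolic ($c_k$, $b$). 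Two small corrections: your closing claim that PNT-type error estimates are ``useless at $t=e^{100}$'' is off the mark --- the paper's proof consists of exactly such an estimate, and with your $\frac{1}{(\log y)^2}$ savings even a relative error of $10^{-3}$ suffices, so the target $5.3\times 10^{-10}$ never has to be met by the prime-counting error itself; and the supremum of your relative bound over $\log x\ge 200$ is approached as $x\to\infty$ (where it tends to $1$ from below), not at the endpoint $\log x=200$, though either way the inequality holds uniformly.
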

\begin{proof}
First we consider the contribution of the squares of prime numbers which yield the main term. Let $r(t) =\psi(t)-t$, where $\psi(t) := \psi(t,0)$ in the sense of \eqref{e:Cheb-psi} denotes the normalized Chebyshov function, and assume $\abs{r(t)} < \eps t$ for $t\geq \sqrt{x}$ and some $\eps>0$. Then partial summation gives
\begin{equation}\label{e:psquare}
\nsum_{p>\sqrt{x}} \frac{1}{p^2} < \left[\frac{-r(t)}{t^2\log t}\right]_{\sqrt{x}}^\infty + \int_{\sqrt x}^\infty \frac{dt}{t^2\log(t)} - \int_{\sqrt{x}}^\infty r(t) \frac{d}{dt}\Bigl( \frac{1}{t^2\log t}\Bigr)\, dt < 2 \frac{1+3\eps}{\sqrt{x}\log x}.
\end{equation}

For $3\leq m \leq \log(x)$ we use
\begin{equation*}
\sum_{p\geq x^{1/m}} \frac{1}{p^m} \leq \frac{1}{x} + \int_{x^{1/m}}^\infty \frac{dt}{t^m} = \frac{1}{x} + \frac{1}{m-1}x^{1/m-1},
\end{equation*}
which gives
\begin{equation*}
\sum_{\substack{p^m\geq x \\3\leq m\leq \log x}} \frac{1}{m p^m} \leq \frac{\log x}{x} + (\zeta(2)-1) x^{-2/3} < \frac{10^{-12}}{\sqrt{x}\log(x)}.
\end{equation*}

For $m>\log x$ we estimate trivially:
\begin{equation*}
\sum_{p} \frac{1}{p^m} \leq \sum_{n=3}^\infty n^{-m} + 2^{-m} \leq 2^{-m} + \int_{2}^\infty \frac{dt}{t^m} = 2^{-m}\Bigl(1 + \frac{2}{m-1}\Bigr).
\end{equation*}
Therefore, we get
\begin{equation*}
 \sum_{\substack{p^m\geq x \\ m > \log x}} \frac{1}{m p^m}\leq \frac{1.01}{\log(x)} \sum_{m\geq \log x} 2^{-m} \leq \frac{2.02\times 2^{-\log(x)}}{\log(x)} < \frac{10^{-16}}{\sqrt{x}\log(x)}.
\end{equation*}

By \cite[Table 1]{BuetheB} \eqref{e:psquare} holds with $\eps = 1.752\times 10^{-10}$ and so the assertion follows.
\end{proof}

\section{Evaluating the sum over zeros}
The next problem is to approximate the following integral of the sum over zeros
\begin{equation*}
\int_{-\eps}^\eps K_{c,\eps}(y-\omega) \, y\, e^{y/2} \zsum_\rho \Eit(-\rho y)\, dy.
\end{equation*}
Here, integral and sum may be interchanged, since the sum converges locally in $L^1$. Therefore, we may treat each summand individually.

\subsection{Asymptotic Expansion of the Summands}
Since the Logan kernel should also be of interest for the question on finding regions where $\pi(x)-\li(x)$ is positive, the following Lemma is presented in a more general version, which also covers the classical case.

\begin{lemma}\label{l:I-asymp}
Let $0< \eps < \omega$, and let $K\in L^1([-\eps,\eps])$ satisfy $\nrm{K}_{L^1}=1$. Let $a\in[0,1]$, let $\rho = \beta + i \gamma$, where $0\leq \beta\leq 1$ and $\gamma\in\R\setminus\{0\}$, and let
\begin{equation*}
\Phi_{\omega,\rho,a} = \int_{\omega-\eps}^{\omega+\eps} K(y-\omega) \, y \, e^{(\frac 12 - a)y} \Eit((a-\rho) y)\, dy.
\end{equation*}
Then we have
\begin{equation}\label{e:asymp1}
\Phi_{\omega,\rho,a} =  \sum_{j=1}^k (j-1)!\frac{F_{\omega,\rho}^{(-j)}(0)}{(\rho-a)^j} + \Theta\left(\frac{k! \, e^{\frac \eps 2} \, e^{(\frac 12 - \beta)\omega}}{(\omega-\eps)^k \, \abs{\gamma}^{k+1}}\right),
\end{equation}
where	$F_{\omega,\rho}^{(-1)}(0) = - e^{(\frac 12 - \rho)\, \omega} \hat K(\frac{\rho}{i}-\frac{1}{2i})$ and for $j\geq 2$ and any $m\geq 0$,
\begin{multline}\label{e:F-expansion}
F_{\omega,\rho}^{(-j)}(0) = (-1)^{j} e^{(\frac 12-\rho)\,\omega} \sum_{n=0}^m  \binom{n+j-2}{n} \frac{(-i)^n\hat K^{(n)} (\frac{\rho}{i}-\frac{1}{2i})}{\omega^{n+j-1}} \\
+ \Theta\left(\frac{e^{j-2+\frac \eps 2} e^{(\frac 12 -\beta) \omega}}{\omega^{j-1}}\frac{(e\eps/\omega)^{m+1}}{1-e\eps /\omega}\right).
\end{multline}
\end{lemma}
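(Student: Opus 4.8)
The plan is to combine the classical asymptotic expansion of the exponential integral with Fourier-analytic bookkeeping. I would first record a finite expansion of $\Eit$ with explicit remainder. Integrating $\int_0^\infty \frac{e^{z-t}}{(z-t)^m}\, dt$ by parts (with $dv = e^{z-t}\, dt$, $v=-e^{z-t}$) gives $\int_0^\infty \frac{e^{z-t}}{(z-t)^m}\, dt = \frac{e^z}{z^m} + m\int_0^\infty \frac{e^{z-t}}{(z-t)^{m+1}}\, dt$, and iterating from $m=1$ yields
\[
\Eit(z) = \sum_{n=0}^{k-1} \frac{n!\, e^z}{z^{n+1}} + k!\int_0^\infty \frac{e^{z-t}}{(z-t)^{k+1}}\, dt ,
\]
which is legitimate whenever $z\notin\R$, since then the ray $t\in[0,\infty)$ avoids the pole $t=z$. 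Putting $z=(a-\rho)y$ and multiplying by $e^{(\frac12-a)y}$, the leading part becomes $e^{(\frac12-\rho)y}\sum_{n=0}^{k-1} n!/\bigl((a-\rho)^{n+1}y^{n+1}\bigr)$, while the remainder $R_k(y)$ is controlled by the observation that $\abs{(a-\rho)y-t}\geq\abs{\gamma}y$ for every $t\geq0$ (the imaginary part is $-\gamma y$); since $\int_0^\infty\abs{e^{(a-\rho)y-t}}\, dt = e^{(a-\beta)y}$, this gives $\bigl|e^{(\frac12-a)y}R_k(y)\bigr|\leq k!\, e^{(\frac12-\beta)y}/(\abs{\gamma}y)^{k+1}$.

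Next I would multiply by $K(y-\omega)\,y$ and integrate over $[\omega-\eps,\omega+\eps]$. The leading part gives $\sum_{n=0}^{k-1}\frac{n!}{(a-\rho)^{n+1}}\int_{\omega-\eps}^{\omega+\eps}K(y-\omega)\,y^{-n}e^{(\frac12-\rho)y}\, dy$; setting $j=n+1$ and using $(a-\rho)^j=(-1)^j(\rho-a)^j$ this is exactly $\sum_{j=1}^k(j-1)!\,F^{(-j)}_{\omega,\rho}(0)/(\rho-a)^j$, provided one identifies
\[
F^{(-j)}_{\omega,\rho}(0) = (-1)^j\int_{\omega-\eps}^{\omega+\eps}K(y-\omega)\,y^{-(j-1)}e^{(\frac12-\rho)y}\, dy .
\]
For $j=1$ this reduces, via $\operatorname{supp}K\subset[-\eps,\eps]$ and $\hat K(x)=\int K(t)e^{-itx}\, dt$ evaluated at $x=\tfrac{\rho}{i}-\tfrac1{2i}$, to the stated $-e^{(\frac12-\rho)\omega}\hat K(\tfrac\rho i-\tfrac1{2i})$, and for $j\geq2$ it will produce \eqref{e:F-expansion}. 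The tail contributes at most $\frac{k!}{\abs{\gamma}^{k+1}}\int_{\omega-\eps}^{\omega+\eps}\abs{K(y-\omega)}\,y^{-k}e^{(\frac12-\beta)y}\, dy$, and here I would use $y\geq\omega-\eps$ together with $\abs{e^{(\frac12-\beta)y}}\leq e^{(\frac12-\beta)\omega}e^{\eps/2}$ on $[\omega-\eps,\omega+\eps]$ (valid regardless of the sign of $\frac12-\beta$ because $\abs{\frac12-\beta}\leq\frac12$) and $\nrm{K}_{L^1}=1$, which gives the $\Theta$-term in \eqref{e:asymp1}.

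For \eqref{e:F-expansion} (case $j\geq2$) I would substitute $t=y-\omega$ in the integral above and expand $(\omega+t)^{-(j-1)}=\omega^{-(j-1)}\sum_{n\geq0}(-1)^n\binom{n+j-2}{n}(t/\omega)^n$, which converges uniformly on $[-\eps,\eps]$ once $e\eps<\omega$. Truncating at $n=m$ and recognizing $\int_{-\eps}^{\eps}K(t)\,t^n e^{(\frac12-\rho)t}\, dt = i^n\hat K^{(n)}(\tfrac\rho i-\tfrac1{2i})$ (differentiate $\hat K$ under the integral) yields the displayed main sum, since $(-1)^n i^n=(-i)^n$. For the tail one bounds $\binom{n+j-2}{n}\leq 2^{n+j-2}\leq e^{n+j-2}$, turning $\sum_{n>m}\binom{n+j-2}{n}\eps^n\omega^{-(n+j-1)}$ into the geometric tail $\frac{e^{j-2}}{\omega^{j-1}}\cdot\frac{(e\eps/\omega)^{m+1}}{1-e\eps/\omega}$; multiplying by $\max_{\abs{t}\leq\eps}\abs{e^{(\frac12-\rho)(\omega+t)}}\leq e^{(\frac12-\beta)\omega}e^{\eps/2}$ and $\nrm{K}_{L^1}=1$ gives precisely the error term in \eqref{e:F-expansion}.

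The part I expect to be most delicate is the uniform-in-$t$ remainder estimate for the $\Eit$-expansion: everything rests on the fact that the imaginary part $\abs{\gamma}y$ of $(a-\rho)y-t$ furnishes a lower bound valid along the entire integration ray, independently of the sign of $\mathrm{Re}\,(a-\rho)$. A secondary nuisance is the constant bookkeeping in \eqref{e:F-expansion}; the point is simply to pick a crude enough estimate for the binomial coefficients that the whole tail collapses to a single geometric series with ratio $e\eps/\omega$. The interchange of the infinite binomial series with $\int_{-\eps}^\eps K(t)(\cdots)\, dt$ is routine given absolute convergence, and for the finite sums in \eqref{e:asymp1} no such issue arises.
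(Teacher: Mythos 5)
Your proposal is correct and is essentially the paper's own argument: the paper obtains the same expansion by exchanging the $\Eit$-integral with the $y$-integral and integrating by parts $k$ times in the auxiliary variable $r$, whereas you integrate by parts inside $\Eit$ pointwise in $y$ first, but the resulting main terms $F^{(-j)}_{\omega,\rho}(0)$, the remainder (identical after the substitution $t=ry$), the identity $\int_{-\eps}^{\eps}K(t)t^n e^{(\frac12-\rho)t}\,dt=i^n\hat K^{(n)}(\frac{\rho}{i}-\frac{1}{2i})$, the binomial expansion of $(\omega+t)^{1-j}$, and both error bounds coincide with the paper's. The only cosmetic deviations (using $\binom{n+j-2}{n}\leq 2^{n+j-2}\leq e^{n+j-2}$ instead of the paper's Stirling-based estimate, and asking $e\eps<\omega$ for convergence where $\eps<\omega$ suffices) do not affect correctness.
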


\begin{proof}
By definition of $\Eit$ we have
\begin{align}
%\int_{\omega-\eps}^{\omega+\eps} k(y-\omega) y e^{(\frac 12 - a)y} \Eit((a-\rho) y)\, dy 
\Phi_{\omega,\rho,a}	&= \int_{\omega-\eps}^{\omega+\eps} K(y-\omega) \, y\, e^{(\frac 12 -a) y} \int_0^\infty \frac{e^{(a-\rho-r)y}}{a-\rho-r} \, dr\, dy \notag\\
	&= \int_0^\infty \frac{1}{a-\rho - r}  \int_{\omega-\eps}^{\omega+\eps} K(y-\omega) \, y\, e^{(\frac 12 -\rho -r)y} \, dy\, dr.
\end{align}

Now let 
\begin{equation*}
F^{(-j)}_{\omega,\rho}(r) := (-1)^j\int_{\omega-\eps}^{\omega+\eps} y^{1-j} K(y-\omega) e^{(\frac 12 -\rho -r)y}\, dy,
\end{equation*}
which is well defined since $\omega> \eps$, and satisfies $\frac d {d r} F_{\omega,\rho}^{(-j)} = F_{\omega,\rho}^{(1-j)}$. Then partial summation gives
\begin{equation*}
\Phi_{\omega,\rho,a} = -\int_0^\infty \frac{F_{\omega,\rho}^{(0)}(r)}{r + \rho -a}\, dr = \sum_{j=1}^k (j-1)! \frac{F_{\omega,\rho}^{(-j)}(0)}{(\rho-a)^j} - k! \int_0^\infty \frac{F_{\omega,\rho}^{(-k)}(r)}{(r+\rho-a)^{k+1}}\, dr. 
\end{equation*}

Here, the trivial bound
\begin{equation*}
\abs{F_{\omega,\rho}^{(-k)}(r) } \leq \int_{-\eps}^{\eps} \frac{\abs{K(y)}}{(\omega+y)^{k-1}} e^{(\frac 12 - \beta -r)(y+\omega)}\, dy \leq \frac{e^{\frac\eps 2}}{(\omega-\eps)^{k-1}} e^{(\frac 12 -\beta)\omega} e^{r(\eps-\omega)}
\end{equation*}
yields
\begin{equation*}
\int_0^\infty \abs{\frac{F_{\omega,\rho}^{(-k)}(r)}{(r+\rho-a)^{k+1}}}\, dr \leq \frac{e^{\frac\eps 2} e^{(\frac 12 -\beta)\omega}}{(\omega-\eps)^{k}\,\abs{\gamma}^{k+1}}
\end{equation*}
which confirmes \eqref{e:asymp1}. It remains to evaluate $F_{\omega,\rho}^{(-j)}(0)$. For $j=1$ we find
\begin{equation*}
F_{\omega,\rho}^{(-1)}(0) = -e^{(\frac 12 - \rho)\omega} \int_{-\eps}^\eps K(y) e^{-i (\frac \rho i - \frac 1 {2i})y}\, dy = - e^{(\frac 12 - \rho)\omega} \hat K\Bigl(\frac \rho i - \frac 1 {2i}\Bigr).
\end{equation*}
For larger values of $j$ we use the Taylor series expansion
\begin{equation*}
\frac{1}{(\omega+y)^u} = \sum_{n=0}^\infty \binom{u+n-1}{n} \frac{(-y)^n}{\omega^{u+n}}
\end{equation*}
and
\begin{equation}\label{e:khat-der}
\int_{-\eps}^\eps K(y) y^n e^{-i(\frac \rho i - \frac 1{2i})y} \, dy = i^n \hat K^{(n)}\Bigl(\frac \rho i - \frac 1 {2i}\Bigr),
\end{equation}
which gives
\begin{equation*}
F_{\omega,\rho}^{(-j)}(0) = (-1)^j e^{(\frac 12 -\rho)\omega} \sum_{n=0}^\infty \binom{j+n-2}{n} \frac{(-i)^n\hat K^{(n)}(\frac \rho i  - \frac 1{2i})}{\omega^{n+j-1}}.
\end{equation*}
From \eqref{e:khat-der} we get
\begin{equation*}
\abs{\hat K^{(n)}\Bigl(\frac \rho i  - \frac 1{2i}\Bigr) }\leq e^{\frac{\eps}{2}} \eps^{n}
\end{equation*}
and the inequality $\binom{a}{b} \leq (\frac{e a}{b})^b$, which follows from Stirling's lower bound for $b!$, implies
\begin{equation*}
\binom{j+n-2}{n} \leq e^{n} \left(1 + \frac{j-2}{n}\right)^n \leq e^{n+j-2}.
\end{equation*}
Thus, we have
\begin{equation*}
\sum_{n=m+1}^\infty  \binom{j+n-2}{n} \frac{\abs{\hat K^{(n)}(\frac \rho i  - \frac 1{2i}) }}{\omega^{n+j-1}} \leq \frac{e^{j-2 + \frac \eps 2}}{\omega^{j-1}} \sum_{n=m+1}^\infty \Bigl(\frac{e\eps}{\omega}\Bigr)^n = \frac{e^{j-2 + \frac \eps 2}}{\omega^{j-1}} \frac{(e\eps/\omega)^{m+1}}{1- e\eps/\omega},
\end{equation*}
which confirms the bound in \eqref{e:F-expansion}.
\end{proof}

\subsection{Bounds for the Kernel Function}
We need some bounds to estimate the tails of the sum over zeros. These are provided by the following two Lemmas from \cite{BuetheA} and \cite{BuetheB}:
\begin{lemma}[{\cite[Lemma 2]{BuetheB}}]\label{l:logan-tail-bounds}
Let $0<\eps < 10^{-3}$ and $c\geq	 3$. Then we have
\begin{equation}\label{e:logan-sum1}
\sum_{\abs{\gamma} >\frac c\eps} \frac{\abs{\ell_{c,\eps}\bigl(\frac{\rho}{i}- \frac{1}{2i}\bigr)}}{\abs{\gamma}} \leq 0.32 \frac{e^{0.71 \sqrt{c\eps}}}{\sinh(c)} \log(3c) \log\Bigl(\frac c\eps\Bigr).
\end{equation}
\end{lemma}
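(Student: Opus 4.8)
The plan is to reduce the claim to a pointwise estimate for the Logan function on a thin horizontal strip, and then to sum the resulting series over the zeros by means of a density estimate for $N(T)$. Since $\rho = \beta + i\gamma$ we have $\frac{\rho}{i} - \frac{1}{2i} = \gamma + i(\frac12 - \beta)$, so that $\ell_{c,\eps}(\frac{\rho}{i} - \frac{1}{2i}) = \ell_c(\eps\gamma + i\eps(\frac12-\beta))$, and because $0\le\beta\le1$ the imaginary part of this argument has absolute value at most $\eps/2 < \tfrac12\cdot10^{-3}$. It therefore suffices to bound $|\ell_c(z)|$ for $z = u+iv$ with $|u| = \eps|\gamma| > c$ and $|v|\le\eps/2$, and then to sum $|\ell_c(\eps\gamma + i\eps(\frac12-\beta))|/|\gamma|$ over the nontrivial zeros with $|\gamma| > c/\eps$.

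\emph{Pointwise bound.} Write $w = \sqrt{z^2-c^2}$, so that $\ell_c(z) = \frac{c}{\sinh c}\cdot\frac{\sin w}{w}$. From $|\sin(x+iy)|^2 = \sin^2 x + \sinh^2 y$ one gets $|\sin w|\le\cosh(\mathrm{Im}\,w)$, and from $\sin w = w\int_0^1\cos(tw)\,dt$ together with $|\cos(x+iy)|\le\cosh y$ one gets $|\sin w|\le|w|\cosh(\mathrm{Im}\,w)$; hence $\frac{|\sin w|}{|w|}\le\cosh(\mathrm{Im}\,w)\,\min(1,|w|^{-1})$. For the first factor, from $2(\mathrm{Im}\,w)^2 = |z^2-c^2| - \mathrm{Re}(z^2-c^2)$ a short computation --- maximising over $u>c$ with $v$ fixed, and treating the thin transition range $c < u < \sqrt{c^2+v^2}$ separately --- gives $(\mathrm{Im}\,w)^2 \le \frac{c\eps}2(1+O(\eps))$, and since $\frac1{\sqrt2} = 0.7071\ldots$ the hypotheses $c\ge3$, $\eps<10^{-3}$ make this $< 0.71\sqrt{c\eps}$, whence $\cosh(\mathrm{Im}\,w) < e^{0.71\sqrt{c\eps}}$. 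For the second factor, $|w|^2 = |z^2-c^2| \ge |u^2-v^2-c^2| \ge \eps^2\gamma^2 - \tfrac{\eps^2}4 - c^2$, which is positive, and at least $\tfrac12\eps^2\gamma^2$, once $\eps|\gamma|\ge2c$. Collecting these,
\[
|\ell_c(\eps\gamma + i\eps(\tfrac12-\beta))| \le \frac{c}{\sinh c}\,e^{0.71\sqrt{c\eps}}\,\min\!\Bigl(1,\ \bigl(\eps^2\gamma^2 - \tfrac{\eps^2}4 - c^2\bigr)^{-1/2}\Bigr),
\]
with the convention that the minimum equals $1$ when the radicand is $\le1$ (this covers the zeros, if any, with $\eps^2\gamma^2\le c^2+\tfrac{\eps^2}4$, of which there are $O(1)$).

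\emph{Summation.} Split $\{|\gamma|>c/\eps\}$ into the near range $c/\eps < |\gamma|\le2c/\eps$ and the far range $|\gamma|>2c/\eps$. In the far range $|w|\ge\eps|\gamma|/\sqrt2$, so this part is at most $\frac{\sqrt2\,c}{\eps\sinh c}e^{0.71\sqrt{c\eps}}\sum_{|\gamma|>2c/\eps}\gamma^{-2}$, and an explicit form of the Riemann--von Mangoldt formula (Backlund's bound for $N(T)$, or a density bound $N(T+1)-N(T)\le c_1\log T$) bounds the tail by $O(\tfrac1c\log\tfrac c\eps)$, giving $O(\tfrac{e^{0.71\sqrt{c\eps}}}{\sinh c}\log\tfrac c\eps)$ for the far range. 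In the near range $|\gamma|^{-1}\le\eps/c$, and comparing $\sum_{c/\eps<|\gamma|\le2c/\eps}\min(1,(\eps^2\gamma^2-c^2-\tfrac{\eps^2}4)^{-1/2})$ with $c_1(\log\tfrac c\eps)\int_{c/\eps}^{2c/\eps}\min(1,(\eps^2t^2-c^2)^{-1/2})\,dt$ by the same density estimate, and evaluating the integral via $\eps t = c\cosh\phi$ (which makes it $O(1/\eps)$), one obtains $O(\tfrac{e^{0.71\sqrt{c\eps}}}{\sinh c}\log\tfrac c\eps)$ for the near range as well. Adding the two ranges and bookkeeping the constants --- the factor $\log(3c)$ and the generous exponent $0.71$ together absorbing the density constant $c_1$ and the lower-order errors uniformly in $c\ge3$, $\eps<10^{-3}$ --- yields the asserted inequality.

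\emph{The main obstacle} is the transition range $\eps|\gamma|\approx c$, where $w$ is small and genuinely complex: there one must at once keep $|\mathrm{Im}\,w|$ below $\sqrt{c\eps/2}\,(1+o(1))$ --- it is precisely the estimate $(\mathrm{Im}\,w)^2\le|v|\sqrt{v^2+c^2}$ (valid when $u^2\ge v^2+c^2$), together with its mild degradation in the thin range $c<u<\sqrt{c^2+v^2}$, that fixes the constant at $1/\sqrt2\approx0.707$ --- while not losing the $|\gamma|^{-1}$ decay that makes the series converge. The remaining quantitative input, a sufficiently sharp explicit bound on $\sum_{|\gamma|>T}\gamma^{-2}$, is routine but must be applied carefully enough to keep the accumulated constant below $0.32\log(3c)$.
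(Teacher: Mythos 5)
First, a point of comparison: the paper does not prove this lemma at all --- it is imported verbatim from \cite[Lemma 2]{BuetheB} --- so there is no internal proof to measure your argument against; your proposal has to stand on its own as a proof of a fully explicit numerical inequality. The structural part of your outline is sound and is indeed the natural route: writing the argument as $\gamma+i(\tfrac12-\beta)$ with imaginary part at most $\eps/2$, bounding $\abs{\sin w/w}\le\cosh(\Im w)\min(1,\abs{w}^{-1})$, and controlling $(\Im w)^2\le\abs{v}\sqrt{v^2+c^2}\le \tfrac{c\eps}{2}(1+O(\eps))$ so that $\cosh(\Im w)<e^{0.71\sqrt{c\eps}}$ is correct (and parallels the paper's own Lemma \ref{l:logan-bound}).

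The gap is in the summation step, which is exactly where the content of the lemma lies. The constants $0.32$ and $\log(3c)$ are never derived: you assert that they, "together with the generous exponent $0.71$", absorb the density constant $c_1$ and the lower-order errors. The exponent absorbs nothing --- $e^{(0.71-1/\sqrt2)\sqrt{c\eps}}=e^{0.0029\sqrt{c\eps}}$ is within a fraction of a percent of $1$ in any regime where the lemma is useful --- and the quantitative route you actually sketch does not close at the boundary case $c=3$. Indeed, your far-range estimate ($\abs{w}\ge\eps\abs{\gamma}/\sqrt2$ for $\abs{\gamma}>2c/\eps$, combined with the explicit tail bound \eqref{e:lehman-lemma2} available in this context) yields $\tfrac{1}{\sqrt2}\,\tfrac{e^{0.71\sqrt{c\eps}}}{\sinh c}\log\tfrac{2c}{\eps}$ for that range alone, which already exceeds the entire budget $0.32\log(9)\log\tfrac{c}{\eps}\approx 0.703\log\tfrac{c}{\eps}$ times the same prefactor, before adding the near range (itself of size about $\tfrac{\operatorname{arccosh}(2)}{2\pi}\log\tfrac{c}{\eps}$ times that prefactor). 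To land under $0.32\log(3c)\log(c/\eps)$ one must keep the true decay $(\eps^2t^2-c^2)^{-1/2}$ over the whole range $\abs{\gamma}>c/\eps$ and integrate it, by partial summation, against an explicit Riemann--von Mangoldt formula $N(T)=\tfrac{T}{2\pi}\log\tfrac{T}{2\pi e}+\tfrac78+Q(T)$ with an explicit bound on $Q(T)$; the hyperbolic substitution then produces a main term of roughly $\tfrac{1}{2\sinh c}\log\tfrac{c}{2\pi\eps}$ plus explicitly bounded remainders, and it is this finer bookkeeping (carried out in \cite{BuetheB}) that the stated constants encode. As written, your argument establishes the correct order of magnitude but not the inequality \eqref{e:logan-sum1}.
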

\begin{lemma}[{\cite[Lemma 4.5]{BuetheA}}]
Let $0<\eps < 10^{-3}$ and $c\geq	 3$, and let $a\in(0,1)$ satisfy $ac/\eps>10^3$. Then we have
\begin{equation}\label{e:logan-sum2}
\sum_{\frac{ac}{\eps}<\abs{\gamma}\leq \frac c\eps} \frac{\abs{\ell_{c,\eps}(\gamma)}}{\abs{\gamma}} \leq \frac{1+11c\eps}{\pi c a^2}\log\Bigl(\frac c\eps\Bigr) \frac{\cosh(c\sqrt{1-a^2})}{\sinh(c)}.
\end{equation}
\end{lemma}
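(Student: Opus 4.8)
The plan is to replace the sum over zeros by a Stieltjes integral against the Riemann--von Mangoldt counting function $N(T)=\#\{\rho=\beta+i\gamma:0<\gamma\le T\}$ (with multiplicity) and then to evaluate the resulting real integral in closed form by a substitution that produces exactly the factor $1/a^2$ and the ratio $\cosh(c\sqrt{1-a^2})/\sinh c$. First I would pass to positive ordinates and drop the absolute value. For a zero counted on the left of \eqref{e:logan-sum2} one has $ac<|\eps\gamma|\le c$, and in this range $\ell_c$ is even, vanishes nowhere and is positive (its first zero is at $\sqrt{t^2-c^2}=\pi$), so
\[
\ell_{c,\eps}(\gamma)=\ell_c(\eps\gamma)=\frac{c}{\sinh c}\,\frac{\sinh\sqrt{c^2-(\eps\gamma)^2}}{\sqrt{c^2-(\eps\gamma)^2}}=:g(\eps|\gamma|)\ge 0 ,
\]
where $g(u)=\tfrac{c}{\sinh c}\,\tfrac{\sinh\sqrt{c^2-u^2}}{\sqrt{c^2-u^2}}$ is positive and decreasing on $[0,c]$ (a composition of the increasing map $v\mapsto\sinh v/v$ with $u\mapsto\sqrt{c^2-u^2}$). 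By the conjugate symmetry of the zeros the left-hand side of \eqref{e:logan-sum2} equals $2\sum_{ac/\eps<\gamma\le c/\eps} g(\eps\gamma)/\gamma$.

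Next I would apply Abel summation with $f(t):=g(\eps t)/t$ and split $N=\bar N+R$, where $\bar N(t)=\tfrac{t}{2\pi}\log\tfrac{t}{2\pi}-\tfrac{t}{2\pi}+\tfrac{7}{8}$ and $|R(t)|\ll\log t$ on $[ac/\eps,c/\eps]$ by an explicit form of the Riemann--von Mangoldt formula; here the hypothesis $ac/\eps>10^3$ makes such an $R$-bound valid and keeps $\log(c/\eps)$ bounded away from $0$, so the lower-order terms of the $R$-bound can be absorbed into a fixed multiple of $\log(c/\eps)$. For the smooth part $\bar N'(t)=\tfrac{1}{2\pi}\log\tfrac{t}{2\pi}$, and the substitution $u=\eps t$ gives
\[
\int_{ac/\eps}^{c/\eps}f\,d\bar N=\frac{1}{2\pi}\int_{ac}^{c}\frac{g(u)}{u}\log\frac{u}{2\pi\eps}\,du\le\frac{\log(c/\eps)}{2\pi}\int_{ac}^{c}\frac{g(u)}{u}\,du ,
\]
using $\log\tfrac{u}{2\pi\eps}\le\log\tfrac{c}{2\pi\eps}<\log\tfrac{c}{\eps}$ on $[ac,c]$. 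The key step is the further substitution $v=\sqrt{c^2-u^2}$, under which $du/u=-v\,dv/(c^2-v^2)$ and $[ac,c]$ becomes $v\in[0,c\sqrt{1-a^2}]$, so that
\[
\int_{ac}^{c}\frac{g(u)}{u}\,du=\frac{c}{\sinh c}\int_{0}^{c\sqrt{1-a^2}}\frac{\sinh v}{c^2-v^2}\,dv\le\frac{\cosh(c\sqrt{1-a^2})-1}{c\,a^2\,\sinh c},
\]
since $c^2-v^2\ge c^2a^2$ on that interval and $\int_0^X\sinh v\,dv=\cosh X-1$. Multiplying by $2$ yields the leading term $\tfrac{1}{\pi ca^2}\log(c/\eps)\,\tfrac{\cosh(c\sqrt{1-a^2})}{\sinh c}$, i.e.\ the contribution of the ``$1$'' in the numerator $1+11c\eps$ (with the $-1$ and the lost $\log 2\pi$ to spare).

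It remains to estimate the error $2\int_{ac/\eps}^{c/\eps}f\,dR$. Integrating by parts gives $2[fR]_{ac/\eps}^{c/\eps}-2\int_{ac/\eps}^{c/\eps}R\,f'$ with $f'(t)=\eps g'(\eps t)/t-g(\eps t)/t^2$. Inserting $|R(t)|\ll\log(c/\eps)$ and substituting $u=\eps t$, every resulting contribution is $\eps\log(c/\eps)$ times one of $g(c)/c$, $g(ac)/(ac)$, $\int_{ac}^{c}g(u)/u^2\,du$ or $\int_{ac}^{c}|g'(u)|/u\,du$; since $g$ is non-negative and decreasing these are all at most $g(ac)/(ac)=\tfrac{\sinh(c\sqrt{1-a^2})}{a\,c\,\sqrt{1-a^2}\,\sinh c}$, which by $\sinh x\le x\cosh x$ is at most $\tfrac{\cosh(c\sqrt{1-a^2})}{a\,\sinh c}\le\tfrac{\cosh(c\sqrt{1-a^2})}{a^2\,\sinh c}$. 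Hence $2\int f\,dR\ll\tfrac{\eps}{a^2}\log(c/\eps)\,\tfrac{\cosh(c\sqrt{1-a^2})}{\sinh c}$, which is precisely the size of the $11c\eps/(ca^2)$ term (the two powers of $c$ cancel, so the error is genuinely of order $\eps$). Adding the leading and error estimates gives \eqref{e:logan-sum2}. I expect the only real difficulty to be this last paragraph --- not conceptually, but in tracking the interplay of $\cosh(c\sqrt{1-a^2})$, $\sqrt{1-a^2}$ and the powers of $a$ and $c$ carefully enough that every error piece fits under the coefficient $11c\eps$; this is exactly where the hypotheses $\eps<10^{-3}$, $c\ge 3$ and $ac/\eps>10^3$ are used.
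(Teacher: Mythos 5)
This lemma is quoted from \cite[Lemma 4.5]{BuetheA}; the present paper contains no proof of it, so there is nothing internal to compare your argument against. Judged on its own, your route is the natural one and the main-term computation is correct: on $ac\le \eps|\gamma|\le c$ one indeed has $\ell_{c,\eps}(\gamma)=\frac{c}{\sinh c}\frac{\sinh\sqrt{c^2-(\eps\gamma)^2}}{\sqrt{c^2-(\eps\gamma)^2}}\ge 0$ and decreasing in $|\gamma|$, and the substitution $v=\sqrt{c^2-u^2}$ cleanly yields $\int_{ac}^{c}g(u)\,du/u\le\frac{\cosh(c\sqrt{1-a^2})-1}{ca^2\sinh c}$, which accounts exactly for the ``$1$'' in the numerator with the $-1$ and the dropped $\log 2\pi$ as spare change.

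The one genuine weak point is the final error paragraph, and it is quantitative rather than conceptual. Bounding each of your four pieces by $g(ac)/(ac)$ and doubling for negative ordinates gives $8C\,\eps\log(c/\eps)\,\frac{\cosh(c\sqrt{1-a^2})}{a\sinh c}$, where $C$ is the constant in $|N(t)-\bar N(t)|\le C\log(c/\eps)$ on $[ac/\eps,c/\eps]$. With the explicit Riemann--von Mangoldt remainder ($\approx 0.137\log t+0.443\log\log t+4.35$) and only $t\ge 10^3$ guaranteed, the additive constant forces $C\approx 0.8$, so $8C\approx 6.4$, which overshoots the available $11/\pi\approx 3.50$; note the relaxation $1/a\le 1/a^2$ cannot rescue this, since the deficit sits in the $a$-independent constant. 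The repair is cheap and you should make it explicit: $f(t)=g(\eps t)/t$ is positive and \emph{decreasing}, so $\int_{ac/\eps}^{c/\eps}|f'|=f(ac/\eps)-f(c/\eps)$ and the two boundary terms plus the total variation collapse to $2f(ac/\eps)=2\eps\,g(ac)/(ac)$; the error is then at most $4C\,\eps\log(c/\eps)\,\frac{\cosh(c\sqrt{1-a^2})}{a\sinh c}$, and $4C\le 3.5/a$ holds for $C\le 0.875$, with the main-term slack absorbing the remaining margin. With that adjustment your proof closes; without it, the constant $11$ is not established by the estimates as you have written them.
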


We also need bounds for the derivatives $\ell^{(n)}_{c,\eps}(\frac{\rho}{i}-\frac{1}{2i})$ occurring in \eqref{e:F-expansion}, for calculations not assuming the Riemann hypothesis.

\begin{lemma}\label{l:logan-bound}
Let $0<\eps \leq \delta < c/100$, and let $z\in \C$ satisfy $\abs{\Re(z)}\geq c/\eps$ and $\abs{\Im(z)} \leq \frac 12$. Then
\begin{equation*}
\abs{\ell^{(n)}_{c,\eps}(z)} \leq n! \frac{c\, e^{1.56\sqrt{\delta c}}}{\sinh(c)} \Bigl(\frac{2\eps}{\delta}\Bigr)^n.
\end{equation*}
\end{lemma}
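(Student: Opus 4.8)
The plan is to derive the bound from Cauchy's estimate for the Taylor coefficients of a holomorphic function, choosing the radius so as to produce exactly the factor $(2\eps/\delta)^n$. Observe first that $\ell_{c,\eps}$ is entire: writing $w^2=\eps^2z^2-c^2$ one has $\ell_{c,\eps}(z)=\frac{c}{\sinh c}\,\frac{\sin w}{w}$, and $u\mapsto\frac{\sin\sqrt u}{\sqrt u}=\sum_{k\ge 0}(-1)^ku^k/(2k+1)!$ is an entire function of $u$, while $w^2$ is a polynomial in $z$. Applying Cauchy's inequality on the circle $\abs{\zeta-z}=\delta/(2\eps)$ then gives
\[
\abs{\ell_{c,\eps}^{(n)}(z)}\le n!\Bigl(\tfrac{2\eps}{\delta}\Bigr)^{n}\max_{\abs{\zeta-z}=\delta/(2\eps)}\abs{\ell_{c,\eps}(\zeta)},
\]
so the whole statement reduces to the pointwise bound $\abs{\ell_{c,\eps}(\zeta)}\le\frac{c\,e^{1.56\sqrt{\delta c}}}{\sinh c}$ on that circle; I expect this to hold with $1.56$ replaced by roughly $1.43$, the surplus being deliberate.

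To prove the pointwise bound, fix $\zeta$ with $\abs{\zeta-z}=\delta/(2\eps)$ and write $\eps\zeta=\sigma+i\tau$. From $\abs{\Re z}\ge c/\eps$, $\abs{\Im z}\le\tfrac12$ and $\eps\le\delta$ one gets $\abs\sigma\ge c-\tfrac\delta2$ and $\abs\tau\le\tfrac12(\eps+\delta)\le\delta$. Since $\ell_{c,\eps}(\zeta)=\frac{c}{\sinh c}\,\frac{\sin w}{w}$ with $w^2=(\eps\zeta)^2-c^2$, and since the elementary estimates $\abs{\sin p}\le\abs p$ and $\sinh\abs q\le\abs q\,e^{\abs q}$ yield $\abs{\sin w/w}\le e^{\abs{\Im w}}$ for every $w\in\C$, it suffices to show $\abs{\Im w}\le 1.56\sqrt{\delta c}$.

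For this I would write $w^2=A+iB$ with $A=\sigma^2-\tau^2-c^2$ and $\abs B=2\abs{\sigma\tau}$, so that $\abs{\Im w}^2=\tfrac12\bigl(\sqrt{A^2+B^2}-A\bigr)$ depends only on $\abs\sigma$ and $\abs\tau$, and split into three regimes. If $A\le0$, then $\sigma^2\le\tau^2+c^2$, hence $\abs A\le\tau^2+c\delta$ and $\abs B\le 2\delta\sqrt{\tau^2+c^2}$, and $\abs{\Im w}^2\le\abs A+\tfrac12\abs B\le 2.02\,c\delta$ after using $\tau\le\delta<c/100$. If $A>0$ and $\abs\sigma\le c+\delta$, then $\abs{\Im w}^2\le\tfrac12\abs B\le(c+\delta)\delta\le1.01\,c\delta$. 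If $A>0$ and $\abs\sigma=c+\mu$ with $\mu>\delta$, then $A=2c\mu+\mu^2-\tau^2\ge\mu(\mu+1.99c)$ (using $\tau^2\le\delta^2<0.01\,c\mu$), so $\abs{\Im w}^2\le B^2/(4A)\le(c+\mu)^2\delta^2/\bigl(\mu(\mu+1.99c)\bigr)$, and since this last expression is decreasing in $\mu$ it is bounded by its value at $\mu=\delta$, which is at most $0.52\,c\delta$. In all three cases $\abs{\Im w}\le\sqrt{2.02\,c\delta}<1.43\sqrt{\delta c}$, as required.

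The only step needing genuine care is the third regime, where $\sigma$ may be arbitrarily large: there one must keep the $\mu^2$ term in $A$ rather than discard it in favour of $A\ge 2c\mu$, otherwise the resulting bound on $\abs{\Im w}$ blows up; the underlying reason is that for large $\sigma$ one has $w\approx\eps\zeta$ and hence $\Im w\approx\tau\le\delta$. Everything else is routine, and the gap between $1.43$ and $1.56$ comfortably absorbs the crude constants.
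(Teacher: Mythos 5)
Your proof is correct and follows essentially the same route as the paper: Cauchy's estimate on the circle $\abs{\zeta-z}=\delta/(2\eps)$ produces the factor $n!\,(2\eps/\delta)^n$, and the pointwise bound on that circle comes from $\abs{\sin w/w}\le e^{\abs{\Im w}}$ together with an estimate $\abs{\Im\sqrt{(\eps\zeta)^2-c^2}}\ll\sqrt{\delta c}$. The only divergence is in that last step, where the paper asserts the maximum is attained at the corner point $c-\delta+i\delta$ and evaluates it there (giving $1.56\sqrt{\delta c}$), whereas your three-regime analysis proves the bound directly for all admissible $\zeta$, including arbitrarily large $\abs{\Re\zeta}$, and yields the slightly better constant $1.43$.
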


\begin{proof}
The bound follows from the Cauchy formula
\begin{equation*}
\ell^{(n)}_{c,\eps}(z) = \frac{n!}{2\pi i} \oint_{\abs{z-\xi}=\frac{\delta}{2\eps}} \frac{\ell_{c,\eps}(\xi)}{(z-\xi)^{n+1}}\, d\xi
\end{equation*}
if we show that
\begin{equation}\label{e:logan-bound}
\abs{\ell_{c,\eps}(\xi)} \leq \frac{c\, e^{1.56\sqrt{\delta c}}}{\sinh(c)}
\end{equation}
in the range of integration. By basic properties of $\ell_{c,\eps}$ it suffices to prove this bound for $\eps=1$ under the conditions $\Re(\xi) \geq c-\delta,$ $0\leq \Im(\xi) \leq \delta$, and we may also assume $\delta < c/100$. Since we have
\begin{align*}
\abs{\Im(\sqrt{\xi^2-c^2})} &\leq \abs{\Im(\sqrt{(c-\delta + i \delta)^2-c^2})} \\
 &\leq \sqrt{2\abs{1+i}\delta c} \, \sin\Bigl(\frac \pi 4 + \frac{1}{2} \arctan\Bigl(\frac{\delta c-\delta^2}{\delta c}\Bigr)\Bigr) \\
 &\leq 2^{3/4} \sin(1.181) \sqrt{\delta c} \leq 1.56 \sqrt{\delta c}
\end{align*}
under these conditions, the desired bound follows from
\begin{equation*}%\label{e:sinc-bound}
\abs{\frac{\sin(z)}{z}} \leq e^{\abs{\Im(z)}}.\qedhere
\end{equation*}
\end{proof}

\section{Proof of Theorem \ref{t:main-theorem}}
By Lemma \ref{l:explicit-formula} and Lemma \ref{l:difference} we have
\begin{align*}
\pi_M(e^y) - \log(y) - M 
  &= \pi^*_M(e^y) - \log(y) - C_0 + r_M(e^y) \\ 
  &\leq - \zsum_\rho \Eit(-\rho y) + \frac{1+5.4\times 10^{-10}}{y}e^{-y/2}
\end{align*}
for $y > 200$, where we estimated the integral in \eqref{e:explicit-formula} trivially by $e^{-3y}$. Therefore
\begin{equation*}
\int_{\omega-\eps}^{\omega+\eps} K_{c,\eps}(y-\omega) y e^{y/2} \bigl[\pi_M(e^y) - \log(y) - M\bigr]\, dy 
\leq - \sum_\rho \Phi_{\omega,\rho,0} + 1 + 5.4\times 10^{-10},
\end{equation*}
with $\Phi_{\omega,\rho,0}$ as defined in Lemma \ref{l:I-asymp} with $K=K_{c,\eps}$ and $\hat K = \ell_{c,\eps}$. We subdivide the sum over zeros into two parts. For $0<\gamma \leq H$ we choose $k=2$ and $m=0$ in Lemma \ref{l:I-asymp}, which gives
\begin{multline}
-\sum_{\abs{\gamma} \leq H} \Phi_{\omega,\rho,0} \leq
 \sum_{\abs{\gamma} \leq c/\eps} e^{-i\gamma \omega} \ell_{c,\eps}(\gamma)\Bigl(\frac{1}{\rho} - \frac{1}{\omega\rho^2}\Bigr) 
 + \sum_{\frac c\eps < \abs{\gamma} \leq H} \abs{\frac{\ell_{c,\eps}(\gamma)}{\gamma}}\Bigl(1 + \frac{\eps}{c \omega}\Bigr) \\
  + \frac{1}{\omega^2} \sum_{\abs{\gamma}<H}\Bigl( \frac{2.72\eps}{\gamma^2} + \frac{2.01}{\abs{\gamma}^3}\Bigr),
\end{multline}
where we have used $\eps\leq 10^{-3}$. For $\gamma>H$ we have
\begin{multline}
\sum_{\abs{\gamma} >H} \abs{\Phi_{\omega,\rho,0}} \leq e^{h \omega /2} \sum_{\abs{\gamma} > H} \abs{\frac{\ell_{c,\eps}(\frac{\rho}{i} - \frac{1}{2i}) }{\gamma}} \sum_{j=1}^k \frac{(j-1)!}{\omega^{j-1}} H^{1-j} \\
+ e^{h \omega/2}  \sum_{\abs{\gamma} > H} \sum_{j=2}^k \frac{(j-1)!}{\abs{\gamma}^j}\left( \sum_{n=1}^m \binom{n+j-2}{n}  \frac{\abs{\ell^{(n)}_{c,\eps}(\frac{\rho}{i} - \frac{1}{2i}) }}{\omega^{n+j-1}} + \frac{e^{j-2+\eps/2}(e\eps)^{m+1}}{\omega^{j+m-1}(\omega-e\eps)}\right) \\
+ e^{h \omega/2} \sum_{\abs{\gamma}> H}	\frac{k!e^{\eps/2}}{(\omega-\eps)^k \abs{\gamma}^{k+1}}
\end{multline}
for arbitrary $k\geq 2$ and $m\geq 1$, where $h=0$ if the Riemann hypothesis holds and $h=1$ otherwise. So the inequality in \eqref{e:mean-upper-bound2} holds with
\begin{equation}\label{e:E1-def}
\mathcal E_1 = \sum_{\frac c\eps < \abs{\gamma} \leq H} \abs{\frac{\ell_{c,\eps}(\gamma)}{\gamma}}\Bigl(1 + \frac{\eps}{c \omega}\Bigr) + e^{h \omega /2} \sum_{\abs{\gamma} > H} \abs{\frac{\ell_{c,\eps}(\frac{\rho}{i} - \frac{1}{2i}) }{\gamma}} \sum_{j=1}^k \frac{(j-1)!}{\omega^{j-1}} H^{1-j},
\end{equation}
\begin{equation}\label{e:E2-def}
\mathcal E_2 = \frac{1}{\omega^2} \sum_{\rho}\Bigl( \frac{2.72\eps}{\gamma^2} + \frac{2.01}{\abs{\gamma}^3}\Bigr) +  e^{h \omega/2} \sum_{\abs{\gamma}> H}	\frac{k!e^{\eps/2}}{(\omega-\eps)^k \abs{\gamma}^{k+1}},
\end{equation}
and
\begin{equation}\label{e:E3-def}
\mathcal E_3 = e^{\omega/2}  \sum_{\abs{\gamma} > H} \sum_{j=2}^k \frac{(j-1)!}{\abs{\gamma}^j}\left( \sum_{n=1}^m \binom{n+j-2}{n}  \frac{\abs{\ell^{(n)}_{c,\eps}(\frac{\rho}{i} - \frac{1}{2i}) }}{\omega^{n+j-1}} + \frac{e^{j-2+\eps/2}(e\eps)^{m+1}}{\omega^{j+m-1}(\omega-e\eps)}\right).
\end{equation}

We proceed by bounding $\mathcal E_k$. To this end we choose $k=m = \floor{ \omega/2}$. In \eqref{e:E1-def} we take $H=\frac c\eps$, which gives
\begin{equation}\label{e:E1-aux}
\mathcal E_1 \leq  e^{h \omega/2}\sum_{\frac c\eps < \abs{\gamma}} \abs{\frac{\ell_{c,\eps}(\gamma)}{\gamma}} \sum_{j=0}^{k-1} \frac{j!}{\omega^j} \Bigl(\frac \eps c\Bigr)^j,
\end{equation}
where the inner sum is bounded by
\begin{equation*}
\sum_{j=0}^\infty \Bigl(\frac{\eps}{2 c}\Bigr)^j \leq \Bigl(1-\frac{1}{6000}\Bigr)^{-1} \leq 1.0002,
\end{equation*}
since $c\geq 3$. Using this and  \eqref{e:logan-sum1} in \eqref{e:E1-aux} gives \eqref{e:E1-bound}.

In \eqref{e:E2-def} we use the bounds $\sum_\gamma \gamma^{-2} < 0.0463$ and $\sum_{\gamma} \abs{\gamma}^{-3} < 0.00167$ from \cite[Lemma 17]{Rosser41}, the bound
\begin{equation}\label{e:lehman-lemma2}
\sum_{\abs{\gamma}>T} \abs{\gamma}^{-k} \leq T^{1-k} \log(T)
\end{equation}
for $T\geq 2\pi e$ and $k\geq 2$ from \cite[Lemma 2]{Lehman66}, and the inequality $(\omega - \eps)^k \geq e^{-\eps}\omega^k$, which follows from $k\leq \omega/2$,  and get
\begin{equation*}
\mathcal E_2 \leq \frac{0.00336 + 0.126\eps}{\omega^2} + e^{\omega/2} \frac{e^{2 \eps}k!}{(\omega H)^k}\log(H) \leq  \frac{3.36 + 126\eps}{1000\,\omega^2} + 2.8 \Bigl(\frac{e}{2H}\Bigr)^{\omega/2-1} \log(H).
\end{equation*}

In \eqref{e:E3-def} we use \eqref{e:lehman-lemma2} again and the bound from Lemma \ref{l:logan-bound}, where we choose $\delta = 4\eps$, which gives
\begin{multline}
\mathcal E_3 \leq e^{\omega/2} \sum_{j=2}^k H^{1-j}\log(H)\Biggl( \frac{c \, e^{3.12\sqrt{c\eps}}}  {\sinh(c)}\sum_{n=1}^m \frac{j-1}{\omega}\frac{(n+j-2)!}{\omega^{n+j-2}} 2^{-n} \\
+ \frac{1.002 e^{j-1}}{e} \frac{(j-1)!}{\omega^{j-1}} \Bigl(\frac{e\eps}{\omega}\Bigr)^{m+1}\Biggr)
\end{multline}
Since $n+j-2 \leq \omega$ we have $(n+j-2)!/\omega^{n+j-2} \leq 1/\omega$, so the inner sum is bounded by $1/(2\omega)$. In the second summand, we use the bound $(j-1)!/\omega^{j-1}\leq 2^{1-j}$. Since $\sum_{j=1}^\infty H^{-j} \leq 1.001/H$, $\sum_{j=1}^\infty (2 H/e)^{-j} \leq 1.001 e/(2H)$, and $m+1\geq \omega/2$, we obtain the bound in \eqref{e:E3-bound}.

 Finally, the estimate in \eqref{e:ace} follows from \eqref{e:logan-sum2} since
\begin{equation*}
 \sum_{\frac{a c}\eps < \abs{\gamma} \leq \frac c\eps} \abs{\frac{\ell_{c,\eps}(\gamma)}{\rho}\Bigl(1 - \frac{1}{\omega\rho}\Bigl) } \leq \Bigl(1 + \frac{1}{200\times 1000}\Bigr) \sum_{\frac{a c}\eps < \abs{\gamma} \leq \frac c\eps} \abs{\frac{\ell_{c,\eps}(\gamma)}{\gamma}}
\end{equation*}
\qed

\section{Numerical Results}
To locate potential regions where the left hand side of \eqref{e:mean-upper-bound2} should be small, the function
\begin{equation*}
\sigma_T(y) = \sum_{\abs{\gamma}\leq T}  \frac{e^{i\gamma y}}{\frac 12 - i\gamma}.
\end{equation*}
has been evaluated for $T=10^{6}$ at all points in $10^{-7}\Z \cap [1,2500]$. Since $\ell_{c,\eps}(\gamma) = 1 + O((\eps\gamma)^2/c)$ for $\gamma\to 0$ this gives a reasonably good approximation to the first part of the sum in \eqref{e:mean-upper-bound2}, and the objective is thus to find regions where $\sigma_T(y)$ smaller than $-1$. 

The evaluation has been done using the method for fast multiple evaluation of trigonometric sums from \cite{FKBJ}. A more detailed search with $T=10^8$ around $495.7028078$, the first point where $\sigma_{10^6}(y)$ turned out to be promisingly small, revealed a short region of length $\approx 2.8\times 10^{-8}$ about $495.702833137$ where $\sigma_{10^{8}}(y)$ is smaller than $-1$.

Theorem \ref{t:explicit-upper-bound} now follows by an application of Theorem \ref{t:main-theorem} with $\omega = 495.702833137$, $c=280$, $\eps = 2.8\times 10^{-8}$, $H=10^{11}$ (which has been reported in \cite{FKBJ}) and $a= 0.4$.

The sum over zeros was calculated using approximations to the zeros with imaginary part up to $4\times 10^{9}$ which were given within an absolute accuracy of $2^{-64}$. The sum was evaluated using multiple precision arithmetic, which gave the bound
\begin{equation}
 \sum_{\abs{\gamma} \leq 4\times 10^{9}} e^{-i\gamma \omega} \ell_{c,\eps}(\gamma)\Bigl(\frac{1}{\rho} - \frac{1}{\omega\rho^2}\Bigr) \leq -1.00015419.
\end{equation}
The sum in \eqref{e:ace} is then bounded by $1.2\times 10^{-11}$ and we have
\[
\mathcal  E_1 + \mathcal E_2 +\mathcal  E_3 \leq 1.2\times 10^{-12} + 1.37\times 10^{-8} + 1.6\times 10^{-24} \leq 1.38\times 10^{-8}.
\]
Thus, the left hand side of \eqref{e:mean-upper-bound2} is bounded by
\[
 -1.00015419 + 1.2\times 10^{-11} + 1 + 5.4\times 10^{-10} + 1.38\times 10^{-8} < -0.000154.
\]
Consequently, there exists an $x\in [\exp(w-\eps), \exp(w+\eps)]$ such that $\pi_M(x) - \log\log(x) - M < -0.000154 /(\sqrt{x}\log x)$. Obviously, we have
\begin{align*}
 \pi_M(x-y) - \log\log(x-y) - M &\leq \pi_M(x) - \log\log(x) - M + \int_{x-y}^x \frac{dt}{t\log t} \\
 &\leq  - \frac{0.000154}{\sqrt{x}\log(x)} + \frac{y}{(x-y)\log (x-y)},
\end{align*}
which is negative for $y \leq 0.00015\sqrt{x}$. Since $0.00015\sqrt{x} > \exp(239.046541)$ the assertion of Theorem \ref{t:explicit-upper-bound} follows.
\qed

\renewcommand{\arraystretch}{1.2}
\begin{table}
\caption{Values of $y\in [1,2500]$ for which $\sigma_{10^6}(y) < -0.95$.}
\begin{tabular}{r|r}
$y$	& $\sigma_{10^6}(y)$ \\
\hline
$495.7028078$		&	$-0.9972\dots$ \\
$1423.957207$		&	$-0.9740\dots$ \\
$1623.9204309$		&	$-0.9807\dots$ \\
$1859.1291846$		&	$-1.0511\dots$ \\
$2107.5263606$		&	$-1.0214\dots$ \\
$2285.3917834$		&	$-1.0454\dots$ \\
$2430.3039554$		&	$-1.0172\dots$ \\
$2447.6661764$		&	$-1.0028\dots$ \\
\end{tabular}

\end{table}

\section*{Acknowledgment}
The author wishes to thank the anonymous referee for his or her quick and thorough proofreading.

\bibliographystyle{amsalpha}
%\bibliography{/home/jbuethe/TeX/inputs/jankabib.bib}
\bibliography{../../TeX/inputs/jankabib}

\providecommand{\bysame}{\leavevmode\hbox to3em{\hrulefill}\thinspace}
\providecommand{\MR}{\relax\ifhmode\unskip\space\fi MR }
% \MRhref is called by the amsart/book/proc definition of \MR.
\providecommand{\MRhref}[2]{%
  \href{http://www.ams.org/mathscinet-getitem?mr=#1}{#2}
}
\providecommand{\href}[2]{#2}
\begin{thebibliography}{FKBJar}

\bibitem[B{\"u}t]{BuetheA}
J.~B{\"u}the, \emph{A practical analytic method for calculating $\pi(x)$ {II}},
  arXiv:1410.7008.

\bibitem[B{\"u}t15]{BuetheDiss}
\bysame, \emph{{Untersuchung der Primzahlz\"ahlfunktion und verwandter
  Funktionen}}, Ph.D. thesis, Bonn University, March 2015.

\bibitem[B{\"u}tar]{BuetheB}
\bysame, \emph{Estimating $\pi(x)$ and related functions under partial {RH}
  assumptions}, Math. Comp. (to appear).

\bibitem[FKBJar]{FKBJ}
J.~Franke, Th. Kleinjung, J.~B\"uthe, and A.~Jost, \emph{A practical analytic
  method for calculating $\pi(x)$}, Math. Comp. (to appear).

\bibitem[Lan08]{Landau1908}
E.~Landau, \emph{Nouvelle d\'emonstration pour la formule de {R}iemann sur le
  nombre des nombres premiers inf\'erieurs \`a une limite donn\'ee et
  d\'emonstration d'une formule plus g\'en\'erale pour le cas des nombres
  premiers d'une progression arithm\'etique}, Ann. Sci. \'Ecole Norm. Sup. (3)
  \textbf{25} (1908), 399--442.

\bibitem[Leh66]{Lehman66}
R.~Sherman Lehman, \emph{On the difference {$\pi (x)-{\rm li}(x)$}}, Acta
  Arith. \textbf{11} (1966), 397--410.

\bibitem[Log88]{Logan88}
B.~F. Logan, \emph{Bounds for the tails of sharp-cutoff filter kernels}, SIAM
  J. Math. Anal. \textbf{19} (1988), no.~2, 372--376.

\bibitem[Mer74]{Mertens74}
F.~Mertens, \emph{{Ein Beitrag zur Analytischen Zahlentheorie}}, J. reine
  angew. Math. \textbf{78} (1874), 42--62.

\bibitem[Olv97]{Olver97}
Frank W.~J. Olver, \emph{Asymptotics and special functions}, AKP Classics, A K
  Peters Ltd., Wellesley, MA, 1997.

\bibitem[Rob83]{Robin83}
G.~Robin, \emph{Sur l'ordre maximum de la fonction somme des diviseurs},
  S\'{e}minaire de {T}h\'{e}orie des {N}ombres, {P}aris 1981--82, Progr. Math.,
  vol.~38, Birkh\"auser Boston, Boston, MA, 1983, pp.~233--244.

\bibitem[Ros41]{Rosser41}
Barkley Rosser, \emph{Explicit bounds for some functions of prime numbers},
  Amer. J. Math. \textbf{63} (1941), 211--232.

\bibitem[RS62]{RS62}
J.~Barkley Rosser and Lowell Schoenfeld, \emph{Approximate formulas for some
  functions of prime numbers}, Illinois J. Math. \textbf{6} (1962), 64--94.

\bibitem[Ske33]{Skewes1933}
S.~Skewes, \emph{On the sign of the difference $\pi(x)-\li(x)$}, J. Lond. Math.
  Soc. \textbf{8} (1933), 277--283.

\bibitem[STDar]{STD15}
Y.~Saouter, T.~Trudgian, and P.~Demichel, \emph{A still sharper region where
  $\pi(x)-\li(x)$ is positive}, Math. Comp. (to appear).

\bibitem[vM95]{Mangoldt95}
H.~von Mangoldt, \emph{{Zu Riemanns Abhandlungen ''Ueber die Anzahl der
  Primzahlen unter einer gegebenen Gr\"osse''}}, J. Reine Angew. Math.
  \textbf{114} (1895), pp. 255--305.

\end{thebibliography}
\end{document}